\documentclass[a4paper,12pt]{article}
\usepackage[centertags]{amsmath}
\usepackage{amsfonts}
\usepackage{amssymb}
\usepackage{amsthm}
\usepackage{amsmath}
\usepackage{dsfont}
\usepackage{graphicx} 
\usepackage{tikz, subfigure}
\usepackage{pstricks-add}
\usepackage{verbatim}

\addtolength{\topmargin}{-2cm} \addtolength{\oddsidemargin}{-1cm}
\addtolength{\textheight}{4cm} \addtolength{\textwidth}{2cm}
\setlength{\parskip}{0.3cm} \setlength{\parindent}{0cm}
\usepackage[latin1]{inputenc}
\usepackage{tikz}
\definecolor {processblue}{cmyk}{0.96,0,0,0}
\usepackage{amsfonts,graphicx,amsmath,amssymb,hyperref,color}
\usepackage[english]{babel}
\usepackage{authblk}

\newtheorem{Theorem}{Theorem}[section]

\newtheorem{Lemma}{Lemma}[section]
\newtheorem{Corollary}{Corollary}[section]
\newtheorem{Conjecture}{Conjecture}[section]
\newtheorem{Definition}{Definition}[section]

\newtheorem{Example}{Example}[section]

\newtheorem{Remark}{Remark}[section]

\newtheorem{Problem}{Problem}[section]

\AtEndDocument{\bigskip{\footnotesize%
  \textsc{Department of Mathematics, Utrecht University, Fac Wiskunde en informatica and MRI, Budapestlaan 6, P.O. Box 80.000, 3508 TA Utrecht, The Netherlands} \par
  \textit{E-mail address:} \texttt{k.dajani1@uu.nl} \par
}}
\AtEndDocument{\bigskip{\footnotesize%
  \textsc{Department of Mathematics, Ningbo University, Ningbo, Zhejiang, People's Republic of China} \par
  \textit{E-mail address:} \texttt{kanjiangbunnik@yahoo.com} \par
}}

\usepackage{lipsum}

\makeatletter
\newcommand*{\rom}[1]{\expandafter\@slowromancap\romannumeral #1@}
\makeatother
\begin{document}
\title{xxxx}
\date{}
 \title{On the points without universal expansions}
 \author{Karma Dajani and Kan Jiang\thanks{   Corresponding author.} }
\maketitle{}
\begin{abstract}
Let  $1<\beta<2$. Given any $x\in[0, (\beta-1)^{-1}]$, a sequence $(a_n)\in\{0,1\}^{\mathbb{N}}$ is called a  $\beta$-expansion  of $x$ if 
$x=\sum_{n=1}^{\infty}a_n\beta^{-n}.$
For any $k\geq 1$  and any $(b_1b_2\cdots b_k)\in\{0,1\}^{k}$, if there exists some $k_0$ such that 
$a_{k_0+1}a_{k_0+2}\cdots a_{k_0+k}=b_1b_2\cdots b_k$, then we call $(a_n)$ a universal $\beta$-expansion of $x$. 
  Sidorov \cite{Sidorov2003}, Dajani and  de Vries   \cite{DajaniDeVrie} proved that given any $1<\beta<2$, then Lebesgue  almost every point has uncountably many universal expansions. In this paper we consider the set $V_{\beta}$ of points without universal expansions. For any $n\geq 2$, let $\beta_n$ be the  $n$-bonacci number satisfying the following equation: $\beta^n=\beta^{n-1}+\beta^{n-2}+\cdots +\beta+1.$
Then we have 
$\dim_{H}(V_{\beta_n})=1$, where $\dim_{H}$ denotes the Hausdorff dimension. Similar results  are still available for some other algebraic numbers. As a corollary, we give some results of  the Hausdorff  dimension of the survivor set generated by some open dynamical systems.  This note is another application of our paper \cite{KarmaKan}. 
\end{abstract}

\section{Introduction}
Let  $1<\beta<2$. Given any $x\in[0, (\beta-1)^{-1}]$, a sequence $(a_n)\in\{0,1\}^{\mathbb{N}}$ is called a  $\beta$-expansion  of $x$ if 
$$x=\sum\limits_{n=1}^{\infty}\dfrac{a_n}{\beta^n}.$$
Sidorov \cite{Sidorov} proved that given any $1<\beta<2$, then almost every point in $[0, (\beta-1)^{-1}]$ has uncountably many expansions.  If $(a_n)$ is the only $\beta$-expansion of $x$, then we call $x$ a univoque point with unique expansion $(a_n)$. Denote by $U_{\beta}$ all the univoque points in base $\beta.$ For the unique expansions, there are many results, see \cite{MK, GS} and references therein. 

Let $(a_n)$ be a $\beta$-expansion of $x$. If for any $k\geq 1$, and any $(b_1b_2\cdots b_k)\in\{0,1\}^{k}$ there exists some $k_0$ such that 
$$a_{k_0+1}a_{k_0+2}\cdots a_{k_0+k}=b_1b_2\cdots b_k,$$ then we call $(a_n)$ a universal $\beta$-expansion of $x$.

The  dynamical approach is a good way which can generate  $\beta$-expansions  effectively. Define $T_0(x)=\beta x, T_1(x)=\beta x-1$, see Figure 1. 

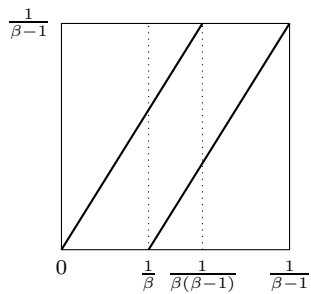
\begin{figure}[h]\label{figure1}
\centering
\begin{tikzpicture}[scale=3]
\draw(0,0)node[below]{\scriptsize 0}--(.382,0)node[below]{\scriptsize$\frac{1}{\beta}$}--(.618,0)node[below]{\scriptsize$\frac{1}{\beta(\beta-1)}$}--(1,0)node[below]{\scriptsize$\frac{1}{\beta-1}$}--(1,1)--(0,1)node[left]{\scriptsize$\frac{1}{\beta-1}$}--(0,.5)--(0,0);
\draw[dotted](.382,0)--(.382,1)(0.618,0)--(0.618,1);
\draw[thick](0,0)--(0.618,1)(.382,0)--(1,1);
\end{tikzpicture}\caption{The dynamical system for $\{T_{0},\,T_{1}\}$}
\end{figure}

Let $x\in [0, (\beta-1)^{-1}]$ with an expansion $(a_n)_{n=1}^{\infty}$, and set $T_{a_1a_2\cdots a_n}=T_{a_n}\circ T_{a_{n-1}}\circ \cdots \circ T_{a_1}$. We call $\{T_{a_1a_2\cdots a_n}(x)\}_{n=0}^{\infty}$ an orbit of $x$ in base $\beta$. For simplicity, we  denote by $T_{a_0}(x)=x$.  Clearly, for different expansions, $x$ has distinct orbits. 
Evidently,  for any $n\geq 1$, we have
$$x=\dfrac{a_1}{\beta}+\dfrac{a_2}{\beta^2}+\cdots+\dfrac{a_n}{\beta^n}+\dfrac{T_{a_1a_2\cdots a_n}(x)}{\beta^n}.$$
The digits $(a_n)$ are chosen in the following way:
if $T_{a_1a_2\cdots a_{j-1}}(x)\in [0,\beta^{-1})$, then  $a_j=0$, if  $T_{a_1a_2\cdots a_{j-1}}(x)\in((\beta-1)^{-1}\beta^{-1},(\beta-1)^{-1}]$, then $a_j=1$. 
However, 
if $T_{a_1a_2\cdots a_{j-1}}(x)\in [\beta^{-1}, (\beta-1)^{-1}\beta^{-1}]$, then we may choose $a_j$  to be $0$ or $1$. Due to this observation, we call $ [\beta^{-1}, (\beta-1)^{-1}\beta^{-1}]$ the switch region. All  the possible  $\beta$-expansions can be generated in terms of this idea, see \cite{KarmaCor, KM}. If $x$ has exactly $k$ different expansions, then we say $x$ has multiple expansions \cite{SN,DJKL,DJKL1}. 

For the set of unique expansions, one has criteria that characterizes this type of  expansions \cite{MK,GS}.  However, for the universal expansions and multiple expansions,  few papers considered this aspect. In this paper, we shall use the dynamical approach to study the universal expansions. 

Universal expansions have a close connection with the following discrete spectra
$$D=\left\{\sum_{i=0}^{n}a_i\beta^i:a_i\in\{0,1\}, n\geq 0\right\}.$$
Denote by $D=\{y_0=0<y_1<y_2<\cdots<y_k<\cdots\}$. 
Define $$L^{1}(\beta)=\limsup_{k\to\infty}(y_{k+1}-y_k).$$
Erd\"{o}s and Komornik \cite{EK} proved that if $L^{1}(\beta)=0$, then all the points of $(0, (\beta-1)^{-1})$ have universal expansions. Moreover, Erd\"{o}s and  Komornik \cite{EK}
 showed that if $1<\beta\leq\sqrt[4]{2}\approx1.19$, then $L^{1}(\beta)=0$.  In particular,  Erd\"{o}s and  Komornik \cite{EK} proved that $L^{1}(\sqrt{2})=0$. 
Sidorov and Solomyak \cite{SidorovSolomyak} also considered some algebraic numbers for which $L^{1}(\beta)=0$, and their result  is improved by Komornik and Akiyama \cite{AK}. In \cite{AK},  Akiyama  and Komornik proved that if  $1<\beta\leq\sqrt[3]{2}\approx 1.26$, then $L^{1}(\beta)=0$.  In \cite{Dejun},  Feng utilized Akiyama  and Komornik's result \cite{AK}, and implememted some ideas in fractal geometry showing that for any  non-Pisot  $\beta\in(1, \sqrt{2}]$ if $\beta^2$ is not a Pisot number, then $L^{1}(\beta)=0$. For the generic results,  Sidorov \cite{Sidorov2003} showed that given any $1<\beta<2$, almost every point in $[0, (\beta-1)^{-1}]$ has at least one universal expansion. Dajani and de Vries \cite{DajaniDeVrie}, used a dynamical approach to show that for any $\beta>1$, almost every point of $[0, (\beta-1)^{-1}]$ has uncountably many universal expansions.

The results of Sidorov\cite{Sidorov} and those of Dajani and de Vries \cite{DajaniDeVrie} imply that the set of points without universal expansions has zero Lebesgue measure.  In other words, the Lebesgue measure of  $V_{\beta}$ is zero, where 
$$V_{\beta}=\{x\in [0, (\beta-1)^{-1}]: x \textrm{ does not have a universal expansion}\}.$$
A natural question is to study the Hausdorff dimension of the set $V_{\beta}$. This is  the main motivation of this paper.  
 Using one property of Pisot numbers,  we have the following result. 
\begin{Theorem}\label{MainTheorem}
For any $n\geq 2$, let $\beta_n$ be the  n-bonacci number satisfying the following equation:
 $$\beta^n=\beta^{n-1}+\beta^{n-2}+\cdots +\beta+1,$$ then $\dim_{H}(V_{\beta_n})=1$.
\end{Theorem}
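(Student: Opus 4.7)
The plan is to exhibit, for each $n \geq 2$, an explicit subset $W_n \subset V_{\beta_n}$ of full Hausdorff dimension, by reducing the problem to a survivor-set estimate for the open dynamical system $(\{T_0,T_1\}, [0,(\beta_n-1)^{-1}])$ with a carefully chosen hole. This fits the framework announced in the abstract and that I expect \cite{KarmaKan} handles in generality.

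First I would fix a forbidden word $w_n = 1^n$. The $n$-bonacci relation $\beta_n^n = \beta_n^{n-1} + \cdots + 1$ singles out $1^n$ as the minimal forbidden factor for the greedy $\beta_n$-shift; the subshift $X_n \subset \{0,1\}^{\mathbb{N}}$ of binary sequences avoiding $1^n$ is of finite type and has topological entropy precisely $\log \beta_n$ (its transition matrix has $\beta_n$ as Perron eigenvalue). Its symbolic image under $(a_i) \mapsto \sum a_i \beta_n^{-i}$ therefore has Hausdorff dimension $\log \beta_n/\log \beta_n = 1$.

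Next I would design a hole $H_n$, a finite union of intervals in $[0, (\beta_n-1)^{-1}]$, positioned so that any $T$-orbit of $x$ that enters $H_n$ is forced to realise the factor $1^n$ in the corresponding expansion, regardless of which of the two branches $T_0, T_1$ is taken at each visit to the switch region $[\beta^{-1},(\beta-1)^{-1}\beta^{-1}]$. Define $K_n$ to be the survivor set of points whose orbit, under every admissible choice sequence, avoids $H_n$. By construction every expansion of every $x \in K_n$ avoids $w_n$, so $K_n \subset V_{\beta_n}$. Applying the survivor-set dimension formula from \cite{KarmaKan}, one obtains $\dim_H K_n = \log \lambda_n / \log \beta_n$, where $\lambda_n$ is the spectral radius of the transition matrix of the hole-avoiding SFT. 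For my choice of $H_n$ this subshift coincides, up to a finite-state modification, with $X_n$, giving $\lambda_n = \beta_n$ and hence $\dim_H K_n = 1$.

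The principal obstacle is the geometric step of locating $H_n$: it must simultaneously block $1^n$-patterns along both $T_0$- and $T_1$-branches at the switch region, while remaining small enough that the survivor subshift retains full $n$-bonacci entropy. The Pisot property of $\beta_n$ is essential here — it produces a positive $L^{1}(\beta_n)$ and rigid spectral gaps into which a suitable hole can be fitted — and the precise placement uses the $n$-bonacci algebraic identity quantitatively to synchronise the symbolic forbidden pattern with a genuine geometric obstruction. Similar results for other Pisot bases should then follow by the same scheme whenever the analogous minimal forbidden factor generates a subshift whose topological entropy equals $\log \beta$.
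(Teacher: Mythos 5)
There is a genuine gap, and it sits at the heart of your argument. You need your survivor set $K_n$ to satisfy two things at once: (i) \emph{every} expansion of every $x\in K_n$ avoids $1^n$ (this is what places $K_n$ inside $V_{\beta_n}$, since a universal expansion must contain the block $1^n$), and (ii) $\dim_H K_n=1$. These are incompatible. The entropy count $\log\beta_n$ for the SFT of sequences avoiding $1^n$ concerns points having \emph{some} expansion avoiding $1^n$ --- and that is every point of $[0,(\beta_n-1)^{-1}]$, since the greedy expansion already avoids $1^n$ by Parry's criterion; its image being an interval of dimension $1$ tells you nothing about $V_{\beta_n}$. The set of points \emph{all} of whose expansions avoid $1^n$ is a genuinely smaller survivor set: once you delete a hole of positive length and demand that \emph{all} admissible orbits miss it, the spectral radius of the surviving transition matrix drops strictly below $\beta_n$, so the dimension formula from \cite{KarmaKan} yields $\dim_H K_n<1$, not $=1$. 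You can also see the obstruction globally: $V_{\beta_n}$ has zero Lebesgue measure (Sidorov, Dajani--de Vries), whereas a graph-directed set with the open set condition, contraction ratios $\beta_n^{-1}$ and Perron eigenvalue exactly $\beta_n$ would have positive, finite $\mathcal{H}^1$-measure and hence positive Lebesgue measure; no such set can be contained in $V_{\beta_n}$.

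What is missing is a limiting family of holes rather than a single one. The paper forbids the blocks $0^N$ (the hole $[0,\beta_n^{-N}(\beta_n-1)^{-1}]$) and lets $N\to\infty$: each set $E_{\beta_n,N}$ of points none of whose expansions contains $0^N$ lies in $V_{\beta_n}$ because a universal expansion must contain $0^N$ for \emph{every} $N$; each has dimension strictly below $1$, but the dimensions tend to $1$ as the hole shrinks, whence $\dim_H V_{\beta_n}\ge\sup_N\dim_H E_{\beta_n,N}=1$. The $n$-bonacci identity is used not to ``fit a hole into a spectral gap'' but to compare the all-orbits survivor set with the greedy one: the rewriting rule $10^n\sim 01^n$ shortens a block of zeros by at most one digit, giving $F_{\beta_n,N-1}\subset E_{\beta_n,N}\subset F_{\beta_n,N}$, and the greedy survivor sets $F_{\beta_n,N}$ are graph-directed sets whose dimensions $\log\lambda_N/\log\beta_n$ are computable and tend to $1$. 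To salvage your scheme you would have to replace the single word $1^n$ by a sequence of forbidden words (e.g.\ $0^N$ or $1^{N}$ with $N\to\infty$) whose associated holes shrink to a point, and then supply the comparison lemma that controls all expansions by the greedy one.
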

For some Pisot numbers, we have similar results. 
 For $1<\beta<\dfrac{1+\sqrt{5}}{2}$,
interestingly, 
the Hausdorff dimension of $V_{\beta}$ has a close connection with an old conjecture posed by
Erd\"{o}s and Komornik \cite{EK}. 
\setcounter{Conjecture}{1}
\begin{Conjecture}
For any non-Pisot $\beta\in\left(1,\dfrac{1+\sqrt{5}}{2}\right)$, $L^{1}(\beta)=0$.
\end{Conjecture}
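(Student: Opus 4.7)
The plan is to extend Feng's fractal–geometric framework, which handles the range $(1,\sqrt{2}]$ under the auxiliary hypothesis that $\beta^2$ is also non-Pisot, all the way up to the golden ratio. The starting point is the standard reformulation: $L^{1}(\beta)=0$ is equivalent to asserting that, for every $\varepsilon>0$, one can find two distinct $\{0,1\}$-polynomials $P,Q$ with $0<|P(\beta)-Q(\beta)|<\varepsilon$. Equivalently, the spectrum $D$ of values at $\beta$ of $\{0,1\}$-polynomials must fail Garsia's uniform separation property, and this failure must persist at arbitrarily large height. The task is to produce such near-cancellations whenever $\beta$ is non-Pisot in the target range.

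My first step would be the algebraic non-Pisot case. If $\beta$ is an algebraic non-Pisot number, it has some Galois conjugate $\gamma$ with $|\gamma|\geq 1$, where equality is the Salem case. For a difference $R=P-Q$ with coefficients in $\{-1,0,1\}$, the product $\prod_\sigma |\sigma(R(\beta))|$ is a nonzero integer, hence $\geq 1$, so $|R(\beta)|$ is constrained from below by the behaviour of $|R(\gamma)|$. When $|\gamma|>1$ strictly, Feng exploits this to show that for $\beta\in(1,\sqrt 2]$ one can always engineer polynomials $R$ with $|R(\gamma)|$ growing fast enough that $|R(\beta)|$ shrinks arbitrarily, giving $L^1(\beta)=0$. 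To push past $\sqrt 2$ up to $(1+\sqrt5)/2$, one needs a more refined overlap analysis of the iterated function system $\{x\mapsto x/\beta,\ x\mapsto x/\beta+1/\beta\}$, presumably by tracking multi-level exact overlaps and applying a Hochman-style dimension/entropy dichotomy to rule out uniform separation whenever $\beta$ has no Pisot-type algebraic integrality.

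Step two is the transcendental case. Any transcendental $\beta\in(1,(1+\sqrt 5)/2)$ is automatically non-Pisot, so must be covered. The natural route is approximation: choose algebraic non-Pisot $\beta_k\to\beta$ for which $L^1(\beta_k)=0$ is known (by Akiyama–Komornik, Feng, or the step above), and transfer via an upper semi-continuity statement for $L^1$. One would also want to combine this with an Erd\H{o}s–Kom\'arnik–style explicit construction of $\{-1,0,1\}$-polynomials whose values at $\beta$ can be directly estimated via the Schmidt subspace or Baker machinery, bypassing any continuity step.

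The main obstacle, and the reason this conjecture has resisted proof since Erd\H{o}s and Komornik posed it, is the Salem regime. For a Salem $\beta$ every non-trivial conjugate lies on the unit circle, so the conjugate-product lower bound degenerates to $\prod_\sigma|\sigma(R(\beta))|\geq 1$ providing no genuine smallness beyond polynomial in the degree of $R$. The interval $(\sqrt2,(1+\sqrt5)/2)$ contains infinitely many small Salem numbers, and the semi-continuity approach of Step two is itself most suspect precisely near Salem points. I therefore expect that any complete proof must introduce a qualitatively new ingredient for Salem $\beta$, such as quantitative equidistribution of Salem conjugate orbits on the unit circle, an additive-combinatorics input for the sumset $\{0,1\}+\beta\{0,1\}+\cdots+\beta^n\{0,1\}$, or an explicit combinatorial construction of polynomials $R$ calibrated to the specific minimal polynomial of $\beta$; a uniform argument across the full non-Pisot range seems out of reach with current technology.
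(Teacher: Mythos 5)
The statement you were asked to prove is not a theorem of the paper: it is the Erd\H{o}s--Komornik conjecture, which the authors record precisely because it is open. The paper proves nothing about it. It only recalls Feng's partial result (the case $\beta\in(1,\sqrt{2}\,]$ with $\beta^{2}$ not a Pisot number) and observes that the dimension results for $V_{\beta}$ could in principle be used to \emph{disprove} the conjecture, since exhibiting a non-Pisot $\beta\in\bigl(1,\tfrac{1+\sqrt{5}}{2}\bigr)$ with $\dim_{H}(V_{\beta})>0$ would force $L^{1}(\beta)>0$. So there is no proof in the paper to compare yours against, and any purported proof should be treated with suspicion from the outset.

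Your proposal is, by your own admission, not a proof but a research programme, and each of its steps has a genuine gap. In Step one, the extension of Feng's overlap analysis past $\sqrt{2}$ is asserted (``presumably by tracking multi-level exact overlaps and applying a Hochman-style dichotomy'') without any argument; Hochman-type results concern dimensions and entropies of self-similar measures, not the separation quantity $L^{1}(\beta)$, and you give no mechanism for converting an entropy dichotomy into the required near-cancellations $0<|P(\beta)-Q(\beta)|<\varepsilon$ occurring at arbitrarily large height, which is what $L^{1}(\beta)=0$ actually demands. In Step two, the transfer to transcendental $\beta$ rests on an ``upper semi-continuity statement for $L^{1}$'' that is not established anywhere and is in fact suspect: the spectrum $D$ varies discontinuously with $\beta$, and no regularity of $\beta\mapsto L^{1}(\beta)$ is available. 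Finally, you correctly identify the Salem regime as the fundamental obstruction and explicitly concede that you cannot handle it. The honest conclusion is already contained in your last sentence: the conjecture remains open, and what you have written is a reasonable survey of why, not a proof.
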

 This conjecture is true if $\beta\in(1,\sqrt{2}]$ and $\beta^2$ is not a Pisot number, see \cite{Dejun}. We
make a brief discussion of the connection between the dimension of $V_\beta$ and this conjecture. 
If we were able to find some non-Pisot number $1<\beta<\dfrac{1+\sqrt{5}}{2}$ such that the Hausdorff dimension of $V_{\beta}$ is positive, then $L^{1}(\beta)>0$. The reason is due  to the fact that $L^{1}(\beta)=0$ implies all the points of $(0, (\beta-1)^{-1})$ have universal expansions. In other words, we disprove the Erd\"{o}s-Komornik conjecture. 
Therefore, considering the Hausdorff dimension of $V_{\beta}$ is meaningful to this conjecture. 
The dimensional problem of $V_{\beta}$ has a strong relation with open dynamical systems. Roughly speaking, 
$V_{\beta}$ is a union of  countable survivor sets generated by some open dynamical systems. These open dynamical systems are smaller than the ususal open systems as we consider all the possible orbits, i.e. all the possible orbits should avoid some holes. In this paper, we shall make use of this tool to study the dimension of $V_{\beta}$. 

The paper is arranged as follows. In section 2,  we start with some necessary  definitions and notation, then we state the main results of the paper. In section 3, we give the proofs, and in section 4 
we give some final remarks.
\section{ Preliminaries and Main results}
In this section, we give some notation and definitions. 
Let 
 $\Omega=\{0,1\}^{\mathbb{N}}$,  $E=[0,(\beta-1)^{-1}]$, and $\sigma$ be the left shift. The  random $\beta$-transformation $K$  is defined in the following way, see \cite{KarmaCor}.
 \begin{Definition}
 $K:\Omega\times E\to \Omega\times E$ is defined by
\begin{equation*}
K(\omega, x)=\left\lbrace\begin{array}{cc}
                (\omega, \beta x)& x\in[0,\beta^{-1})\\
       (\sigma(\omega), \beta x-\omega_1)& x\in[\beta^{-1},\beta^{-1}(\beta-1)^{-1}]\\
              (\omega, \beta x-1)& x\in(\beta^{-1}(\beta-1)^{-1},(\beta-1)^{-1}]\\
                \end{array}\right.
\end{equation*}
\end{Definition}
We call $[\beta^{-1},\beta^{-1}(\beta-1)^{-1}]$ the switch region since in this region we can choose the  digit to be usedand change from 0 to 1 or vica versa..

When the orbits of points hit or enter the switch region and we always choose the digit $1$, then we call this algorithm the greedy algorithm.  More precisely, the greedy map is defined in the following way:
 $G: E\to  E$ is defined by
\begin{equation*}
G(x)=\left\lbrace\begin{array}{cc}
             \beta x& x\in[0,\beta^{-1})\\
          \beta x-1& x\in[\beta^{-1},(\beta-1)^{-1}]\\
                \end{array}\right.
\end{equation*}
Let $(\omega, x)\in \Omega\times E$. For any $n\geq 1$,  we denote by $K^{n}(\omega, x)=K(K^{n-1}(\omega, x)) $ the $n$ iteration of $K$, and  let  $\pi(\omega, x))=x$ be the projection in the second  coordinate
We can study $\beta$-expansions via the following iterated function system,
\[f_j(x)=\dfrac{x+j}{\beta},\,j\in\{0,1\}.\]
The self-similar set \cite{Hutchinson} for this IFS is the interval
$\left[0,(\beta-1)^{-1}\right]$. This tool is  useful in the proof of Lemma \ref{lem:315}. 
Before we state our main results, we   define some sets. 
Given  $1<\beta<2$ and any $N\geq 3$. Define $$E_{\beta, N}=\{x\in[0, (\beta-1)^{-1}]: \textrm{ no orbit of } x \textrm{ hits }[0,\beta^{-N}(\beta-1)^{-1}]\},$$
$$F_{\beta, N}=\{x\in[0, (\beta-1)^{-1}]: \textrm{ the greedy orbit of  } x \textrm{ does not hit }[0,\beta^{-N}(\beta-1)^{-1}]\}.$$
We can give a simple symbolic explanation of $E_{\beta, N}$, namely, any $\beta$-expansion $(a_n)$ of any point in $E_{\beta, N}$ does not contain the block $\underbrace{(00\cdots 0)}_{N}$.
Let $$\mathcal{O}=\{\pi(K^{n}(\omega, 1))\cup \pi(K^{n}(\omega, (\beta-1)^{-1}-1):n\geq 0, \omega\in \Omega\}$$
 be the set of all possible orbits of $1$ and $(\beta-1)^{-1}-1.$
 An algebraic number $\beta>1$ is called a Pisot number if all of its conjugates lie inside the unit circle. 
 Now we state our main results of this paper. 
 \setcounter{Theorem}{1}
\begin{Theorem}\label{thm:21}
For any $n\geq 2$, let $\beta_n$ be the  n-bonacci number satisfying the following equation:
 $$\beta^n=\beta^{n-1}+\beta^{n-2}+\cdots +\beta+1,$$ then $\dim_{H}(V_{\beta_n})=1$.
\end{Theorem}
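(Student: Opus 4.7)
The plan is to prove $\dim_{H}(V_{\beta_n})=1$ by constructing, for every large $N$, a compact subset of $V_{\beta_n}$ whose Hausdorff dimension is arbitrarily close to $1$. The starting observation, already made in the excerpt, is that $E_{\beta,N}\subseteq V_{\beta}$ for every $N\geq 3$, because every $\beta$-expansion of every $x\in E_{\beta,N}$ avoids the block $0^{N}$ and therefore cannot be universal. The trivial upper bound $\dim_{H}(V_{\beta_n})\leq 1$ being obvious, it suffices to prove
$$\lim_{N\to\infty}\dim_{H}(E_{\beta_n,N})=1.$$

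First I would recast $E_{\beta,N}$ in terms of the random $\beta$-transformation $K$. Setting $H=[0,\beta^{-N}(\beta-1)^{-1}]$, the defining condition for $x\in E_{\beta,N}$ is that $\pi(K^{k}(\omega,x))\notin H$ for every $\omega\in\Omega$ and every $k\geq 0$, since allowing $\omega$ to vary sweeps out all admissible switching choices. Next, the Pisot property of $\beta_n$ enters: it guarantees that the orbit set $\mathcal{O}$ is finite, and this finite structure yields a Markov partition of $[0,(\beta_n-1)^{-1}]$ which is simultaneously respected by both branches of $K$. Using this partition, and deleting the cells that can communicate with $H$, I would build a sub-IFS $\{f_{w}:w\in\mathcal{A}_N\}$ (with $f_{w}=f_{w_1}\circ\cdots\circ f_{w_{|w|}}$) whose attractor $K_N$ is contained in $E_{\beta_n,N}$ and whose Hausdorff dimension is the unique $s_N$ satisfying $\sum_{w\in\mathcal{A}_N}\beta_n^{-|w|s_N}=1$. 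As $N\to\infty$ the admissible family $\mathcal{A}_N$ becomes cofinal in the set of all finite words, forcing $s_N\to 1$ and giving the required lower bound on $\dim_{H}(V_{\beta_n})$.

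The delicate step, and the one I expect to be the main obstacle, is checking $K_N\subseteq E_{\beta_n,N}$: the attractor must sit inside the set where \emph{every} expansion — not only the coding produced by the sub-IFS itself — avoids $0^{N}$. A point landing in the switch region can create a new expansion that reintroduces $N$ consecutive zeros, so one must show that the Markov cells used to build $\mathcal{A}_N$ are stable under this switching. This is exactly where the Pisot hypothesis on $\beta_n$ is crucial: the $n$-bonacci relation $\beta_n^{n}=\beta_n^{n-1}+\cdots+\beta_n+1$ forces the boundary orbits of $1$ and $(\beta_n-1)^{-1}-1$ to be finite, so the switching combinatorics can be fully controlled, following the open-dynamical-systems scheme developed in \cite{KarmaKan}.
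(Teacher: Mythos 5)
Your overall skeleton matches the paper's: observe $E_{\beta,N}\subseteq V_{\beta}$ (no expansion containing the block $0^{N}$ can be universal), and then drive $\dim_{H}(E_{\beta_n,N})\to 1$ as $N\to\infty$. You also correctly flag the one genuinely delicate point: the set $E_{\beta_n,N}$ requires \emph{every} expansion of $x$ to avoid $0^{N}$, not just the coding produced by whatever Markov/IFS structure you set up. The problem is that your proposal does not actually close this gap. You attribute the resolution to ``the Pisot hypothesis forces the boundary orbits to be finite, so the switching combinatorics can be fully controlled.'' Finiteness of the orbits of $1$ and $(\beta-1)^{-1}-1$ is exactly what gives you a computable dimension for the \emph{greedy} survivor set $F_{\beta,N}$ (the paper's Corollary 2.3 states this for all Pisot $\beta$), but it says nothing by itself about the smaller set $E_{\beta,N}$: deleting from your Markov graph all cells that ``can communicate with $H$'' under \emph{some} branch choice could in principle kill most of the graph, and you give no argument that the surviving subsystem still has spectral radius close to $\beta_n$. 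Indeed the paper explicitly remarks that computing $\dim_{H}(E_{\beta,N})$ is hard in general and proves the theorem only for the multinacci numbers — a sign that Pisot-ness alone is not the operative hypothesis.

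The missing ingredient is a concrete combinatorial fact special to the $n$-bonacci numbers: since the quasi-greedy expansion of $1$ is $(1^{n-1}0)^{\infty}$, any expansion can be converted to the greedy one by repeated application of the substitution $10^{n}\sim 01^{n}$, and this conversion shrinks any maximal block of zeros by at most one digit. Hence if some expansion of $x$ contains $0^{N}$, the greedy expansion contains $0^{N-1}$; contrapositively, $F_{\beta_n,N-1}\subset E_{\beta_n,N}$ (the paper's Lemma 3.2). This sandwich $F_{\beta_n,N-1}\subset E_{\beta_n,N}\subset F_{\beta_n,N}$ is what transfers the asymptotic dimension computation from the tractable greedy survivor sets to $E_{\beta_n,N}$, and it is the step your write-up replaces with a hope that the switching can ``be fully controlled.'' Without this (or an equivalent quantitative statement about how alternative expansions interact with long zero blocks), your containment $K_{N}\subseteq E_{\beta_n,N}$ with $\dim_{H}(K_{N})\to 1$ is unsupported, so the proof is incomplete at its decisive point.
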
 
The followig result gives a sufficient condition under which the Hausdorff dimension of $F_{\beta, N}$ can be calculated.

\setcounter{Corollary}{2}
\begin{Corollary}\label{cor:23}
Let   $\dfrac{1+\sqrt{5}}{2}<\beta<2$. If all the possible orbits of $1$ hit finite points, then given any $N\geq 3$, $\dim_{H}(F_{\beta, N})$ can be calculated explicitly. In particularly, for any Pisot number in $(1,2)$, $\dim_{H}(F_{\beta, N})$ can be calculated.
\end{Corollary}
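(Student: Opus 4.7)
The plan is to realise $F_{\beta,N}$ as a subshift of finite type coded by a finite Markov partition of the greedy map $G$, and then compute its Hausdorff dimension as $\log\rho(M)/\log\beta$, where $\rho(M)$ is the spectral radius of the resulting $0$-$1$ transition matrix.

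First, I would track the greedy orbit of the hole endpoint $c:=\beta^{-N}(\beta-1)^{-1}$. Using $\beta>\tfrac{1+\sqrt5}{2}$ and $N\geq 3$, a direct check shows $\beta^{k-N}(\beta-1)^{-1}<\beta^{-1}$ for $k=0,\dots,N-2$, so the greedy digits are $0$ up to step $N-2$; at step $N-1$ the orbit lands on $\beta^{-1}(\beta-1)^{-1}$ (greedy digit $1$), giving $G^N(c)=(\beta-1)^{-1}-1$. By hypothesis the orbit of $1$ under the random $\beta$-transformation $K$ meets only finitely many points, and by the symmetry $x\mapsto(\beta-1)^{-1}-x$ (combined with the digit swap $0\leftrightarrow 1$) the same holds for $(\beta-1)^{-1}-1$. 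Therefore the union $P$ of the forward orbits of $\{0,1,c,(\beta-1)^{-1}-1,\beta^{-1},\beta^{-1}(\beta-1)^{-1}\}$ under $G$ is a finite subset of $[0,(\beta-1)^{-1}]$.

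Next, I take the partition $\mathcal P$ of $[0,(\beta-1)^{-1}]$ whose endpoints are the points of $P$. Since $G$ is piecewise affine of slope $\beta$ with break at $\beta^{-1}\in P$ and sends $P$ into $P$, each cell of $\mathcal P$ is mapped bijectively onto a union of cells, so $\mathcal P$ is Markov for $G$. By construction, $[0,c]$ is a union of cells; let $I_1,\dots,I_m$ be the cells \emph{not} contained in $[0,c]$, and define $M\in\{0,1\}^{m\times m}$ by $M_{ij}=1$ iff $I_j\subset G(I_i)$. Up to the countable set of $G$-preimages of $c$, the survivor set $F_{\beta,N}$ coincides with the set of $x\in\bigcup I_i$ whose itinerary $(i_n)$ satisfies $M_{i_n,i_{n+1}}=1$ for every $n\geq 0$. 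Because $G|_{I_i}$ has $|G'|=\beta$ on every cell, the standard formula for the Hausdorff dimension of a piecewise linear Markov repeller (equivalently, of the attractor of the associated graph-directed IFS with contractions of ratio $1/\beta$) yields
\[\dim_H(F_{\beta,N})=\frac{\log\rho(M)}{\log\beta},\]
which is explicitly computable since $M$ is a finite $0$-$1$ matrix. The Pisot case follows from the classical theorem of Bertrand--Mathis and Schmidt: for Pisot $\beta$, every element of $\mathbb Q(\beta)\cap[0,(\beta-1)^{-1}]$ has only finitely many images under $K$, so the hypothesis of the corollary holds automatically.

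The main obstacle is to verify that the partition built above really is finite and Markov. This is precisely what the finiteness of the orbits of $1$ and $(\beta-1)^{-1}-1$ buys us: the first step shows that the orbit of $c$ is absorbed into that of $(\beta-1)^{-1}-1$ after $N$ steps, so all orbits involved close up within the finite set $P$, and the transition matrix $M$ is genuinely finite. A secondary technical point is to check irreducibility (or to pass to the maximal recurrent component of $M$) so that the spectral-radius formula gives the dimension of $F_{\beta,N}$ itself rather than merely an upper bound.
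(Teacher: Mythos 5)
Your proposal is correct and follows essentially the same route as the paper: the paper likewise uses the finiteness of the orbits of $1$ (Lemma 3.10 for the Pisot case, together with Corollary 3.11) to produce a finite Markov partition, identifies $F_{\beta,N}$ up to a countable set with a graph-directed self-similar set satisfying the open set condition, and reads off $\dim_H(F_{\beta,N})=\log\rho(M)/\log\beta$ from Mauldin--Williams. Your explicit tracking of the hole endpoint $c=\beta^{-N}(\beta-1)^{-1}$ into the orbit of $(\beta-1)^{-1}-1$ (where the hypothesis $\beta>\tfrac{1+\sqrt5}{2}$ enters) is exactly the point the paper leaves implicit.
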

This result is indeed a corollary of \cite[Theorem 4.2]{KarmaKan}. 
Generally,  calculating the Hausdorff dimension of $E_{\beta, N}$ is not an easy problem. By definition, $E_{\beta, N}\subset F_{\beta, N}$ for any $N\geq 3.$ Hence, $E_{\beta, N}$ is a smaller survivor set, and it is difficult to  calculate the dimension of this set. However, for the sequence $(\beta_n)$, we have the following asymptotic result. 
\setcounter{Theorem}{3}
\begin{Theorem}\label{thm:24}
For any $n\geq 2$ and $N\geq 3$, let $\beta_n$ be the  n-bonacci number, then $\dim_{H}(F_{\beta_n, N-1})\leq \dim_{H}(E_{\beta_n, N})\leq \dim_{H}(F_{\beta_n, N})$. Subsequently, 
  $$\lim_{N\to \infty}\dim_{H}(E_{\beta_n, N})=\lim_{N\to \infty}\dim_{H}(F_{\beta_n, N})=1.$$
  Moreover, for any $N>2n+4$, $\dim_{H}(F_{\beta_n, N}\setminus E_{\beta_n, N})>0.$
Furthermore, we can find some set with positive Hausdorff dimension such that every point in this set has uncountably many expansions, but none of them is a universal expansion. 
\end{Theorem}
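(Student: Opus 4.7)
My plan is to prove the four assertions in order. The fundamental tool is that for the $n$-bonacci number $\beta_n$, the identity $1=\beta_n^{-1}+\beta_n^{-2}+\cdots+\beta_n^{-n}$ produces the only nontrivial local relation between $\beta_n$-digit blocks, namely $10^n\leftrightarrow 01^n$; consequently every $\beta_n$-expansion of a given $x$ arises from its greedy expansion by a (possibly infinite) sequence of such local switches, each of which can enlarge a block of consecutive zeros by at most one symbol.

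For the chain of inequalities, the inclusion $E_{\beta_n,N}\subseteq F_{\beta_n,N}$ is immediate since the greedy orbit is one of the orbits. For the left inequality I would prove $F_{\beta_n,N-1}\subseteq E_{\beta_n,N}$: if the greedy expansion of $x$ contains no block $0^{N-1}$, then by the switch bookkeeping no other expansion can contain $0^N$ (the $1^n$ created by a switch blocks further cascading switches for $n$ positions to its right), and the symbolic description of $E_{\beta_n,N}$ given earlier in the paper then gives $x\in E_{\beta_n,N}$. The limit statement follows by identifying $F_{\beta_n,N}$ with the survivor set of the greedy map with hole $[0,\beta_n^{-N}(\beta_n-1)^{-1}]$ and applying Corollary~\ref{cor:23} (ultimately Theorem~4.2 of \cite{KarmaKan}); as $N\to\infty$ the hole shrinks to $\{0\}$, the Perron root of the associated incidence matrix tends to $\beta_n$, and the dimension tends to $1$. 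The squeeze from the chain then transports the same limit to $E_{\beta_n,N}$.

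For the strict positivity of $\dim_H(F_{\beta_n,N}\setminus E_{\beta_n,N})$ when $N>2n+4$, the plan is to construct an explicit Cantor subfamily of positive dimension. By the switch analysis, $x$ lies in this difference precisely when the greedy expansion of $x$ avoids $0^N$ but contains somewhere the local pattern $0^{N-1}\,1\,0^n$, since one switch at the middle $1$ produces an alternative expansion containing $0^N$. I would fix a position for this pattern and concatenate on the right with any admissible greedy tail whose $0$-blocks are bounded by $N-1$; the hypothesis $N>2n+4$ is exactly what is needed to guarantee that the resulting subshift of finite type has positive topological entropy and hence positive Hausdorff dimension after projection through the coding map. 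For the last clause, further restricting to points whose greedy orbit visits the switch region $[\beta_n^{-1},\beta_n^{-1}(\beta_n-1)^{-1}]$ infinitely often gives uncountably many expansions at each point by independent binary branching, while by construction no expansion contains $0^K$ for a fixed large $K$ and is therefore non-universal; the dimension is preserved under this extra restriction by an entropy argument in the spirit of \cite{DajaniDeVrie}. The main obstacle is the positive-entropy verification in Part 3: one must carefully count the admissible extensions of the fixed pattern within the greedy subshift subject to the no-$0^N$ constraint, and the bound $N>2n+4$ is precisely what quantifies the combinatorial room needed to ensure exponential growth of admissible words.
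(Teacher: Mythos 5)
Your overall architecture matches the paper's: the chain of inequalities comes from the trivial inclusion $E_{\beta_n,N}\subseteq F_{\beta_n,N}$ together with $F_{\beta_n,N-1}\subseteq E_{\beta_n,N}$, the latter proved by the bookkeeping claim that the switch $10^n\sim 01^n$ changes the length of a zero-block by at most one (this is exactly Lemma~\ref{lem:32}; the paper states it in the contrapositive direction, converting an expansion containing $0^N$ into the greedy one and watching the block shrink to at least $0^{N-1}$, and is no more detailed than you are on why iterated switches cannot cascade). The limit is obtained the same way, via the graph-directed/survivor-set dimension formula $\dim_H(F_{\beta_n,N})=\log\lambda_N/\log\beta_n$ with $\lambda_N\to\beta_n$. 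For the third claim the core idea is also the same --- exhibit greedy expansions avoiding $0^N$ that contain a pattern which a single switch turns into $0^N$ --- but the concrete constructions differ: the paper uses the $S$-gap-type set $D=\{10^{i_1}10^{i_2}\cdots: n+1\le i_k\le N-1,\ \text{infinitely many } i_k=N-1\}$ and bounds its dimension below by an explicit two-map self-similar set satisfying the open set condition, whereas you pin one occurrence of $0^{N-1}10^n$ and append arbitrary no-$0^N$ greedy tails. Your version is fine in principle and if anything simpler.

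Two soft spots. First, your claim that $N>2n+4$ is ``exactly what is needed'' for positive entropy is not right: the tail subshift you append has entropy $\log\lambda_N>0$ already for small $N$ (e.g.\ $N\ge 4$ when $n=2$), so your construction proves positivity on a wider range; in the paper the bound $N>2n+4$ only serves as a convenient sufficient condition for the separation/open-set-condition check of its particular IFS $\{x/\beta^{n+2}+1/\beta,\ x/\beta^{N}+1/\beta\}$. Since the theorem only asserts the conclusion for $N>2n+4$, this mislabelling does not break the proof, but the justification you give for the hypothesis is not the real one. Second, and more substantively, your argument for ``uncountably many expansions'' in the last clause --- restricting to points whose greedy orbit visits the switch region infinitely often and invoking ``independent binary branching'' --- has a gap as stated: once you branch at one visit you are on a different orbit, and nothing guarantees that orbit returns to the switch region, so infinitely many visits of the \emph{greedy} orbit do not by themselves yield a full binary tree of expansions. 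The paper avoids this by arguing combinatorially: every point of $p(D)$ has infinitely many pairwise disjoint occurrences of blocks $10^{i_k}$ with $i_k\ge n$, each of which can be replaced by $01^n0^{i_k-n}$ independently of the others, giving $2^{\aleph_0}$ expansions outright; non-universality then follows because (by the same bookkeeping as in Lemma~\ref{lem:32}) no expansion of such a point can contain $0^{N+1}$. You should replace your orbit-visit argument by this combinatorial one, which is available verbatim in your setting.
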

The last statement strengthens  one result of \cite[Counterexample]{Sidorov2003}. 
The following result is about the topological structure of $E_{\beta, N}$. 
\begin{Theorem}\label{thm:25}
Given any $N\geq 3$,
for almost every $\beta\in(1,2)$,   $E_{\beta, N}$ is a graph-directed self-similar set. 
\end{Theorem}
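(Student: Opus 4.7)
The plan is to realise $E_{\beta,N}$ as the attractor of a finite graph-directed iterated function system built from $f_0(x)=x/\beta$ and $f_1(x)=(x+1)/\beta$. For each $k\in\{0,1,\ldots,N-1\}$, let $E^{(k)}_{\beta,N}$ be the set of $y\in[0,(\beta-1)^{-1}]$ such that, viewing $y$ as a point whose $\beta$-expansion is to be read after $k$ consecutive zeros have already been recorded, every valid $\beta$-expansion of $y$ extends to a sequence that avoids the block $0^N$. Then $E^{(0)}_{\beta,N}=E_{\beta,N}$, and partitioning $[0,(\beta-1)^{-1}]$ into the deterministic-zero region $[0,\beta^{-1})$, the switch region $S=[\beta^{-1},\beta^{-1}(\beta-1)^{-1}]$, and the deterministic-one region $(\beta^{-1}(\beta-1)^{-1},(\beta-1)^{-1}]$ yields the set identities
\begin{align*}
E^{(k)}_{\beta,N}\cap[0,\beta^{-1})&=f_0(E^{(k+1)}_{\beta,N})\cap[0,\beta^{-1}),\\
E^{(k)}_{\beta,N}\cap(\beta^{-1}(\beta-1)^{-1},(\beta-1)^{-1}]&=f_1(E^{(0)}_{\beta,N})\cap(\beta^{-1}(\beta-1)^{-1},(\beta-1)^{-1}],\\
E^{(k)}_{\beta,N}\cap S&=f_0(E^{(k+1)}_{\beta,N})\cap f_1(E^{(0)}_{\beta,N})\cap S
\end{align*}
for $0\leq k<N-1$, while $E^{(N-1)}_{\beta,N}$ meets neither $[0,\beta^{-1})$ nor $S$ (choosing $a_1=0$ in either place would create the forbidden block $0^N$).

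Outside the switch region these identities already express each $E^{(k)}_{\beta,N}$ as a union of contracted copies of the other $E^{(j)}_{\beta,N}$'s, so they define a graph-directed structure on the vertex set $\{0,1,\ldots,N-1\}$ with edges $f_0\colon k\to k+1$ (for $k<N-1$) and $f_1\colon k\to 0$. The real work is the intersection piece $f_0(E^{(k+1)}_{\beta,N})\cap f_1(E^{(0)}_{\beta,N})\cap S$, which is not of the required union-of-contractions form. The resolution is to refine the switch region: because $S$ is contained in both $f_0([0,(\beta-1)^{-1}])$ and $f_1([0,(\beta-1)^{-1}])$, the preimages $f_0^{-1}(S)=[1,(\beta-1)^{-1}]$ and $f_1^{-1}(S)=[0,(\beta-1)^{-1}-1]$ are subintervals of the whole attractor that can themselves be re-partitioned by the three-region decomposition. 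Iterating produces a recursive description of the switch-region trace of $E^{(k)}_{\beta,N}$ as an \emph{a priori} countable graph-directed set, with one new vertex for every refined cylinder.

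The hypothesis ``for almost every $\beta$'' is invoked precisely to terminate this refinement after finitely many steps. For Lebesgue-a.e.\ $\beta\in(1,2)$, the endpoints $\beta^{-1}$ and $\beta^{-1}(\beta-1)^{-1}$ of $S$ avoid all numbers of the form $\sum_{i=0}^{n}c_i\beta^{-i}$ with $c_i\in\{0,1\}$; each such algebraic coincidence would cut out a genuinely new refined cylinder. The exceptional set of $\beta$'s is a countable union of solution sets of polynomial equations in $\beta$, hence Lebesgue-null. The hard part is therefore the refinement-termination lemma itself: bounding uniformly the number of distinct refined cylinders on this full-measure set of $\beta$. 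This is exactly where the graph-directed machinery of the authors' earlier work \cite{KarmaKan} applies. Once the lemma is in hand, one reads off a finite directed graph of affine contractions whose vertex-$0$ attractor coincides with $E_{\beta,N}$, and the theorem follows.
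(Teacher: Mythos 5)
Your structural setup (vertices indexed by the number of consecutive zeros already recorded, with the intersection $f_0(E^{(k+1)}_{\beta,N})\cap f_1(E^{(0)}_{\beta,N})$ appearing on the switch region) correctly identifies where the difficulty lies, but the mechanism you invoke to resolve it does not work, and the step you yourself label ``the hard part'' --- the refinement-termination lemma --- is exactly the content of the theorem and is left unproved. Worse, your justification for termination is inverted. You argue that for a.e.\ $\beta$ the endpoints of the switch region avoid algebraic coincidences with finite sums $\sum_i c_i\beta^{-i}$, and that this bounds the number of refined cylinders. The opposite is true: the refinement you describe tracks (pre)orbits of partition endpoints, and such a scheme closes up into finitely many vertices precisely when those orbits are finite or eventually periodic, which happens only for a countable family of algebraic bases (this is why the paper's Corollary 2.3 is restricted to Pisot-type $\beta$). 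For Lebesgue-a.e.\ $\beta$ the relevant orbits are infinite --- indeed dense, by Schmeling's theorem, which is the very fact the paper exploits --- so your a priori countable graph-directed system generically never terminates, and genericity cannot be used to force finiteness in the way you suggest.

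The missing idea in the paper's proof (Lemma \ref{lem:315} combined with Lemma \ref{lem:316}) is a hole-enlargement argument rather than a cylinder refinement. For a.e.\ $\beta$ the orbit of $1$ is dense, so one finds a finite word $\eta_1\cdots\eta_p$ with $T_{\eta_1\cdots\eta_p}(\beta^{-N}(\beta-1)^{-1})$ lying in the open hole $(0,\beta^{-N}(\beta-1)^{-1})$. By continuity a slightly larger interval $[0,\beta^{-N}(\beta-1)^{-1}+\delta]$ is carried into the hole by $T_{\eta_1\cdots\eta_p}$, so the hole may be thickened to a finite union $\mathbb{F}'$ of level-$L$ cylinders without changing $E_{\beta,N}$: any orbit entering $\mathbb{F}'$ can be continued so as to enter the original hole. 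Consequently $E_{\beta,N}$ is exactly the set of points all of whose codings avoid the finite word list $\mathbb{F}$, and it is this reduction to finitely many forbidden words --- not a generic transversality of endpoints --- that allows the graph-directed machinery of \cite{KarmaKan} to produce a \emph{finite} directed graph handling the ``all expansions'' quantifier (and hence the intersections on the switch region). Without this step your argument does not close.
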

\section{Proof of  main theorems}
In this section, we  give a proof of  Theorem \ref{MainTheorem}.
To begin with, we recall some classical results and notation. 
An expansion $(a_n)$ is called the quasi-greedy expansion if it is the largest infinite expansions, in the  sense of lexicographical ordering. Denote by $\sigma((a_n)_{n=1}^{\infty})=(a_n)_{n=2}^{\infty}$, and $\sigma^k((a_n)_{n=1}^{\infty})=(a_n)_{n=k+1}^{\infty}$.  
Let $(\alpha_n)$ be the quasi-greedy expansion of $1$.  The following classical result was proved by Parry \cite{Parry}. 
\begin{Theorem}\label{Uniquecodings}
 Let $(a_n)_{n=1}^{\infty}$ be an expansion of $x\in [0,(\beta-1)^{-1}]$. Then $(a_n)_{n=1}^{\infty}$ is a greedy expansion if and only if \[\sigma^{k}((a_n)_{n=1}^{\infty})<(\alpha_n)_{n=1}^{\infty}\] if $a_k=0$.
\end{Theorem}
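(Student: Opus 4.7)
The plan is to prove Parry's classical characterization by treating the two implications separately, relying on two structural properties of the greedy map $G$: \emph{shift-invariance} (the tail of a greedy expansion is itself the greedy expansion of the corresponding iterate under $G$) and \emph{monotonicity} (larger values have lexicographically larger greedy expansions). Both follow directly from the definition of $G$.

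For the forward direction, I assume $(a_n)$ is the greedy expansion of $x$ and $a_k=0$. Set $x_k := G^k(x)$; by shift-invariance, $\sigma^k(a_n)$ is the greedy expansion of $x_k$. The condition $a_k=0$ in the greedy algorithm forces $G^{k-1}(x)\in[0,\beta^{-1})$, hence $x_k=\beta\,G^{k-1}(x)<1$. I therefore reduce to the assertion that, for any $y\in[0,1)$, the greedy expansion of $y$ is strictly lex-less than $(\alpha_n)$. If the greedy expansion of $1$ is infinite, it coincides with $(\alpha_n)$ and monotonicity finishes the job. If it is finite, say $\beta_1\cdots\beta_{m-1}1\,0^{\infty}$, then by construction $(\alpha_n)=(\beta_1\cdots\beta_{m-1}\,0)^{\infty}$, and I argue by induction on the first position at which greedy$(y)$ disagrees with greedy$(1)$, using the periodic self-similar structure of $(\alpha_n)$ and applying the inductive hypothesis to the tail of greedy$(y)$ beyond position $m$.

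For the backward direction, I proceed by contradiction. Assume $(a_n)$ satisfies the stated lexicographic condition and write $(b_n)$ for the greedy expansion of $x:=\sum_n a_n\beta^{-n}$. If $(a_n)\neq(b_n)$, pick the first index $k$ of disagreement; lex-maximality of greedy forces $b_k=1$ and $a_k=0$. Equating the two value-sums and cancelling the agreeing prefix gives $\sum_{j\geq 1}a_{k+j}\beta^{-j}\geq 1$. I must now convert this value inequality into $\sigma^k(a_n)\geq(\alpha_n)$ lex, contradicting the hypothesis (since $a_k=0$). This uses the extremal property of $(\alpha_n)$ as the largest infinite expansion of $1$: if instead $\sigma^k(a_n)<(\alpha_n)$ lex, then analyzing the first position of disagreement and bounding the tail via $\sum_i\alpha_i\beta^{-i}=1$ together with the self-admissibility $\sigma^j(\alpha_n)\leq(\alpha_n)$ yields $\sum_j a_{k+j}\beta^{-j}<1$, the desired contradiction.

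The main technical obstacle in both directions is the finite-greedy case, where greedy$(1)$ and the quasi-greedy $(\alpha_n)$ genuinely differ: lexicographic comparisons with $(\alpha_n)$ must exploit its eventual periodicity, and the induction relies on the self-admissibility $\sigma^j(\alpha_n)\leq(\alpha_n)$, which is precisely the defining property of $(\alpha_n)$ as quasi-greedy. A secondary but real subtlety, most visible in the backward direction, is that in a non-integer base different sequences can represent the same value, so one cannot convert value inequalities to lexicographic ones by uniqueness; the extremal property of $(\alpha_n)$ is doing essential work at exactly this point.
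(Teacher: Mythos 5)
The paper does not actually prove this statement: it is quoted as Parry's classical criterion and attributed to \cite{Parry}, so there is no in-paper argument to compare yours against. Your outline is the standard route to this result (shift-invariance and lexicographic monotonicity of the greedy map for the forward direction; first-disagreement-with-greedy plus the extremal and self-admissibility properties of the quasi-greedy sequence $(\alpha_n)$ for the converse), and as a plan it is sound. One step deserves a concrete warning, because as written it is not quite right: in the backward direction you claim that $\sigma^{k}((a_n))<(\alpha_n)$ yields $\sum_{j\ge 1}a_{k+j}\beta^{-j}<1$. Using the lexicographic condition \emph{only at the index $k$} this implication is false: for the golden ratio one has $(\alpha_n)=(10)^{\infty}$, and the sequence $01^{\infty}$ is lexicographically smaller than $(10)^{\infty}$ yet has value exactly $1$. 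The correct statement needs the hypothesis at \emph{every} subsequent position carrying the digit $0$, fed into a recursion on successive first-disagreement indices; moreover that recursion naturally delivers only $\sum_{j}a_{k+j}\beta^{-j}\le 1$, so the boundary case of equality (where $\sigma^{k}((a_n))$ is itself an expansion of $1$) must be excluded separately, e.g.\ by noting that an expansion of $1$ satisfying the full admissibility condition would have to coincide with $(\alpha_n)$, contradicting strictness. Since your written sketch invokes the condition only at $k$ and asserts a strict inequality in one stroke, this is the point where the details must actually be supplied; the rest of the outline (including the treatment of the finite greedy expansion of $1$ via the periodicity of $(\alpha_n)$) is the standard argument and goes through.
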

\setcounter{Lemma}{1}
\begin{Lemma}\label{lem:32}
For any $n\geq 2$, let $\beta_n$ be the  n-bonacci number. Then for any $N\geq 3$,  
$$F_{\beta_n, N-1}\subset E_{\beta_n, N}.$$
\end{Lemma}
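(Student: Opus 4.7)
I would argue by contrapositive: assume $x\in E_{\beta_n,N}^{c}$, so that some expansion $(a_i)$ of $x$ contains a block $a_{j+1}=\cdots=a_{j+N}=0$, equivalently the orbit satisfies $T_{a_1\cdots a_j}(x)\le \beta_n^{-N}(\beta_n-1)^{-1}$. The goal is to exhibit a step $k$ with $G^{k}(x)\le h_{N-1}$, where $h_M:=\beta_n^{-M}(\beta_n-1)^{-1}$, which will show that $x\notin F_{\beta_n,N-1}$. The principal tool is a comparison between $(a_i)$ and the greedy expansion $(g_i)$ of $x$, together with the $n$-bonacci identity $\beta_n^{-k}=\beta_n^{-k-1}+\cdots+\beta_n^{-k-n}$.

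Let $p$ be the first index where $(g_i)$ and $(a_i)$ disagree; by the lex-maximality of greedy, $g_p=1$, $a_p=0$, and $g_i=a_i$ for $i<p$. I would split according to the location of $p$. First, if $p\ge j+N+1$ (or the two expansions never diverge), then greedy inherits the full $0^{N}$ block, giving $G^{j}(x)\le h_N\le h_{N-1}$. Second, if $j+1\le p\le j+N$, then $g_{j+1}=\cdots=g_{p-1}=0$ and $g_p=1$; using that $g_p=1$ forces $G^{p-1}(x)=T^{a}_{p-1}(x)\ge \beta_n^{-1}$, while the block of zeros in $(a_i)$ gives $T^{a}_{p-1}(x)\le\beta_n^{\,p-1-j-N}(\beta_n-1)^{-1}$, a short calculation (using $\beta_n(\beta_n-1)\ge1$ with equality iff $n=2$) pins down $p$ to the value $j+N$, or in the golden-mean case to the boundary value $j+N-1$; in either subcase a direct evaluation shows $G^{p}(x)\le h_{N-1}$.

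The delicate case is $p\le j$, where greedy and $(a_i)$ diverge strictly before the block. Setting $v:=G^{p-1}(x)=T^{a}_{p-1}(x)$, one has $G^{p}(x)=\beta_n v-1$ and $T^{a}_{p}(x)=\beta_n v=G^{p}(x)+1$, so the tail $(a_{p+1},a_{p+2},\dots)$ is an expansion of $G^{p}(x)+1$ which still contains the block $0^{N}$, but now at the strictly smaller position $j-p+1$. This suggests induction on $j$: the inductive hypothesis applied to $T^{a}_{p}(x)$ gives a step at which the greedy orbit of $T^{a}_{p}(x)$ enters $H_{N-1}$, and the task becomes transferring this information to the greedy orbit of $G^{p}(x)=T^{a}_{p}(x)-1$. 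For $\beta_n$ I would use the explicit affine relation $G(y)=G(y-1)+(\beta_n-1)$ valid for every $y\in[1,(\beta_n-1)^{-1}]$ (which holds for every $n$-bonacci base because $(\beta_n-1)^{-1}\le 1+\beta_n^{-1}$), together with the Pisot property of $\beta_n$, which forces the differences $T^{a}_{k}(x)-G^{k}(x)\in\mathbb{Z}[\beta_n]$ to lie in a finite, discrete set of admissible values.

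The main obstacle is precisely this transfer step in Case~$p\le j$: a naive induction hypothesis talks about the greedy orbit of $T^{a}_{p}(x)$, while we need to control the greedy orbit of the companion point $G^{p}(x)=T^{a}_{p}(x)-1$, and greedy is not continuous under the shift $y\mapsto y-1$. I expect the cleanest route is to combine the affine relation above with the Pisot finiteness to argue, by induction on $j$, that every time the $(a_i)$-orbit enters $H_N$ the matching greedy value at the same step must already sit inside $H_{N-1}$; the $n$-bonacci identity provides exactly the algebraic slack ($h_{N-1}=\beta_n\cdot h_N$) required to absorb the unit difference introduced at each divergence.
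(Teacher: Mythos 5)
Your overall strategy (argue the contrapositive, compare the given expansion $(a_i)$ containing the block $0^N$ with the greedy expansion $(g_i)$ of $x$, and split on the first index $p$ of disagreement) is reasonable, and the cases $p\ge j+1$ are handled correctly: there the greedy expansion visibly retains a block $0^{N-1}$, with only a harmless boundary subcase when $n=2$. But the case $p\le j$ --- which you yourself flag as ``the main obstacle'' --- is exactly where the content of the lemma lies, and your proposal does not close it. The inductive hypothesis you formulate concerns the greedy orbit of $y:=T_{a_1\cdots a_p}(x)=G^{p}(x)+1$, whereas what is needed is a statement about the greedy orbit of $G^{p}(x)=y-1$. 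The affine identity $G(y)=G(y-1)+(\beta_n-1)$ only propagates a difference of $\beta_n-1$ for a single step and does not telescope: after that step the two orbits differ by a non-integer, their subsequent greedy digits can disagree, and the Pisot property (finiteness of the set of attainable differences) gives no digit-by-digit control of the kind you would need. ``I expect the cleanest route is \dots'' is a plan, not an argument, so as written the proof has a genuine gap precisely in the hard case.

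The paper closes this case by a combinatorial device your proposal never invokes. The $n$-bonacci identity $\beta_n^{-k}=\beta_n^{-k-1}+\cdots+\beta_n^{-k-n}$ means that the local rewriting $10^{n}\sim 01^{n}$ preserves the value of an expansion, and an arbitrary expansion is carried to the greedy one by repeated application of $01^{n}\to 10^{n}$. A single such carry applied across the pattern $0^{N}1^{n}$ produces $0^{N-1}10^{n}$; that is, each carry shortens a run of zeros by at most one, and the newly created zeros sit to the right of a $1$, so the surviving block $0^{N-1}$ cannot be eaten by further carries. Hence the greedy expansion of $x$ still contains $0^{N-1}$ and $x\notin F_{\beta_n,N-1}$. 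This is the ingredient that disposes of your case $p\le j$ in one stroke, with no induction on $j$ and no transfer lemma relating the greedy orbits of $y$ and $y-1$. I recommend replacing the inductive scheme by this carry argument (any direct proof of your transfer step would in effect reprove the carry rule anyway).
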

\begin{proof}
 Since $\beta_n$ is the Pisot number satisfying  the  equation $\beta^n=\beta^{n-1}+\beta^{n-2}+\cdots +\beta+1,$  it follows that the quasi-greedy expansion of $1$ is $(1^{n-1}0)^{\infty}$. Hence the block $1^{n-1}$ can appear in the greedy expansions. 
 In other words, any expansion in base $\beta_n$  can be changed into the greedy expansions using the rule $10^{n}\sim 01^{n}$, i.e. the block $10^{n}$ can be replaced by $01^{n}$ without changing the value of the corresponding number. Given any point $x\notin E_{\beta_n, N}$, there exists an expansion of $x$ such that its coding, say $(a_n)$,  consists of a block $(0\cdots 0)$ with length $N$, i.e. there exists some $k_0$ such that 
 $a_{k_0+1}\cdots a_{k_0+N}=0\cdots 0$.  If $(a_n)$ is the greedy expansion of $x$, then 
clearly $x\notin F_{\beta_n, N-1}$. Assume $(a_n)$ is not the greedy expansion. We can
transform $(a_n)$ into the greedy expansion of $x$ by using the rule $10^{n}\sim 01^{n}$. Denote the acquired greedy expansion of $x$ by $(b_n)$.  Notice that the used transformation shrinks a block of zeros in the sequence  $(a_n)$ by at most one term. To be more precise, if $a_{k_0+1}\cdots a_{k_0+N}a_{k_0+N+1}a_{k_0+N+n}=\underbrace{0\cdots 0}_{N} 1^n$, then the corresponding block is $$b_{k_0+1}\cdots b_{k_0+N}b_{k_0+N+1}b_{k_0+N+n}=\underbrace{0\cdots 0 }_{N-1}10^n.$$
Thus, $x\notin F_{\beta_n, N-1}$.
\end{proof}
Next, we want to prove that
$$\lim\limits_{N\to \infty}\dim_{H}(F_{\beta_n, N})=1.$$
This result can be   obtained by the perturbation theory, it is essentially proved by Ferguson and Pollicott \cite[Theorem 1.2]{FP}.   
\begin{Lemma}\label{lem:31}
For any $1<\beta<2$, 
$\lim\limits_{N\to \infty}\dim_{H}(F_{\beta, N})=1$.
\end{Lemma}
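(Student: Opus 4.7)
The plan is to identify $F_{\beta,N}$ with the survivor set of the greedy map $G$ for the shrinking hole
\[ H_N := [0,\beta^{-N}(\beta-1)^{-1}], \]
namely $F_{\beta,N}=\{x\in E:\ G^k(x)\notin H_N \text{ for all } k\geq 0\}$. The map $G$ is piecewise linear and uniformly expanding with constant derivative $\beta$, its attractor $E$ has Hausdorff dimension $1$, and the holes are nested with $\bigcap_{N\geq 3}H_N=\{0\}$, a set of Hausdorff dimension $0$. So on a conceptual level we are exactly in the setting where the Ferguson--Pollicott perturbation theorem applies: the Hausdorff dimension of the repeller depends continuously on the hole, and as the hole collapses to a zero-dimensional set the dimension of the survivor set must tend to the dimension of the whole attractor. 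Invoking \cite[Theorem~1.2]{FP} in this form gives $\dim_H(F_{\beta,N})\to 1$.

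If one prefers a self-contained argument, the same conclusion can be obtained by a direct subshift-of-finite-type approximation. By Parry's theorem (Theorem~\ref{Uniquecodings}) a sequence is a greedy $\beta$-expansion iff every shift with $a_k=0$ is strictly lexicographically smaller than the quasi-greedy sequence $(\alpha_n)$ of $1$. Fix a large integer $M$ and define an SFT $\Sigma_{M,N}\subset\{0,1\}^{\mathbb{N}}$ by forbidding (i) the word $0^N$ and (ii) every length-$M$ word that violates the Parry condition through its first $M$ symbols. Then $\Sigma_{M,N}$ is a subshift of finite type whose projection under $\pi(a_n)=\sum a_n\beta^{-n}$ lands inside $F_{\beta,N}$. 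Since the greedy $\beta$-shift has topological entropy exactly $\log\beta$, choosing $M=M(N)\to\infty$ slowly ensures $h_{\mathrm{top}}(\Sigma_{M,N})\to\log\beta$; because the underlying IFS $\{f_0,f_1\}$ is conformal with constant contraction $\beta^{-1}$, Bowen's formula yields $\dim_H\pi(\Sigma_{M,N})=h_{\mathrm{top}}(\Sigma_{M,N})/\log\beta$. Thus
\[ \dim_H F_{\beta,N}\ \geq\ \dim_H \pi(\Sigma_{M(N),N})\ \longrightarrow\ 1, \]
and the upper bound $\dim_H F_{\beta,N}\leq 1$ is trivial.

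The main technical obstacle is the second ingredient in the SFT construction: one must verify that after imposing the forbidden word $0^N$ the remaining Parry-admissible sequences still carry entropy arbitrarily close to $\log\beta$. For $\beta$ near $1$ the quasi-greedy sequence $(\alpha_n)$ is itself highly constrained, so the ``large'' SFT of greedy codings has many long runs of $0$'s and removing $0^N$ is a genuine (though small) perturbation. This is exactly the analytic content the Ferguson--Pollicott theorem absorbs: topological pressure of an expanding Markov repeller is continuous in the hole, so the entropy gap vanishes as $N\to\infty$. Either route — invoking \cite[Theorem~1.2]{FP} directly, or executing the SFT approximation by hand and estimating the entropy defect via a standard perturbation of the Parry transition matrix — concludes the proof.
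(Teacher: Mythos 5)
Your first paragraph---identifying $F_{\beta,N}$ with the survivor set of the greedy map for the shrinking hole $[0,\beta^{-N}(\beta-1)^{-1}]$ and invoking the Ferguson--Pollicott perturbation theorem \cite[Theorem~1.2]{FP}---is exactly the paper's proof of this lemma, which likewise consists of nothing more than that citation. Your alternative subshift-of-finite-type sketch is not needed for this statement and is essentially the strategy the paper carries out in detail afterwards (Markov partition, adjacency matrix, spectral radius), but only for the special multinacci bases $\beta_n$ where the Parry condition genuinely reduces to a finite-type constraint.
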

Here we give a detailed proof of our desired limit,
\begin{Lemma}\label{lem:34}
$$\lim\limits_{N\to \infty}\dim_{H}(F_{\beta_n, N})=1.$$
\end{Lemma}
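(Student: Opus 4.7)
The plan is to interpret $F_{\beta_n,N}$ symbolically as the projection of an explicit subshift of finite type, to estimate its topological entropy using the $n$-bonacci recurrence, and to convert the entropy into Hausdorff dimension via the Markov structure of the greedy map.

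First I would use Theorem \ref{Uniquecodings}: since the quasi-greedy expansion of $1$ in base $\beta_n$ is $(1^{n-1}0)^{\infty}$, a binary sequence $(a_i)$ is a greedy $\beta_n$-expansion if and only if it avoids the block $1^n$. By the dynamical description, the greedy orbit of $x$ visits $[0,\beta_n^{-N}(\beta_n-1)^{-1}]$ at some step $k$ precisely when $a_{k+1}=\cdots=a_{k+N}=0$ in the greedy expansion. Hence $F_{\beta_n,N}=\pi_{\beta_n}(\Sigma_{n,N})$, where
\[
\Sigma_{n,N}=\{(a_i)\in\{0,1\}^{\mathbb{N}}:(a_i)\text{ avoids both }1^n\text{ and }0^N\}
\]
and $\pi_{\beta_n}((a_i))=\sum_{i\geq 1}a_i\beta_n^{-i}$. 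Crucially, every sequence in $\Sigma_{n,N}$ is already a greedy expansion, so $\pi_{\beta_n}$ is injective on $\Sigma_{n,N}$.

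Second I would estimate the topological entropy of $\Sigma_{n,N}$, which equals $\log\lambda_{n,N}$ where $\lambda_{n,N}$ is the Perron eigenvalue of the finite transition matrix tracking current run lengths. The sequence $\{\lambda_{n,N}\}_{N\geq 3}$ is nondecreasing and bounded above by $\beta_n$, because the larger subshift forbidding only $1^n$ has its count of admissible length-$k$ words satisfying the $n$-bonacci recurrence $a_k=a_{k-1}+\cdots+a_{k-n}$, whose growth rate equals $\beta_n$. For the matching lower bound, any length-$T$ word avoiding $1^n$ automatically avoids $0^N$ whenever $N>T$, so the counts of admissible length-$T$ words in the two subshifts coincide for $N>T$. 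Letting $T\to\infty$ along $N=N(T)\to\infty$ then forces $\lambda_{n,N}\to\beta_n$.

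Third I would convert the entropy into dimension. The upper bound $\dim_H F_{\beta_n,N}\leq \log\lambda_{n,N}/\log\beta_n$ is immediate by covering with projected length-$k$ cylinders, each of diameter $\beta_n^{-k}(\beta_n-1)^{-1}$. For the matching lower bound, I would exploit that $\beta_n$ is Pisot with eventually periodic greedy expansion of $1$, so the greedy map admits a finite Markov partition; $F_{\beta_n,N}$ is then a Markov subset cut out by forbidding the transition word $0^N$, and the push-forward of the Parry measure of maximal entropy on $\Sigma_{n,N}$ (well defined on $F_{\beta_n,N}$ by the injectivity from Step 1) has local dimension $\log\lambda_{n,N}/\log\beta_n$ almost everywhere, giving the lower bound by mass distribution. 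Combined with $\lambda_{n,N}\to\beta_n$, this yields $\dim_H F_{\beta_n,N}\to 1$. The main obstacle is the lower bound: even though $\beta_n<2$ forces the IFS $\{f_0,f_1\}$ to overlap, restricting to greedy sequences and using the Markov structure of the greedy map sidesteps the standard overlap difficulties.
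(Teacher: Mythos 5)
Your overall strategy --- code $F_{\beta_n,N}$ by a subshift of finite type, compute the Perron eigenvalue $\lambda_{n,N}$, convert entropy to dimension via the Markov structure of the greedy map, and let $N\to\infty$ --- is essentially the paper's argument (the paper builds a Markov partition from the orbit of $1$, deletes the hole state to get a submatrix $S'$, and invokes the Mauldin--Williams graph-directed machinery where you invoke the Parry measure and mass distribution; these are the same computation, giving $\dim_H F_{\beta_n,N}=\log\lambda_N/\log\beta_n$).

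There is, however, one step that is false as stated: the claim that the greedy orbit visits $[0,\beta_n^{-N}(\beta_n-1)^{-1}]$ at time $k$ \emph{precisely when} $a_{k+1}=\cdots=a_{k+N}=0$. Only one implication holds. If $a_{k+1}=\cdots=a_{k+N}=0$ then $G^k(x)\le\sum_{i>N}\beta_n^{-i}=\beta_n^{-N}(\beta_n-1)^{-1}$, so the orbit is in the hole; but a point in the hole need not have $N$ consecutive zero greedy digits. For instance, for $n=2$ one has $(\beta-1)^{-1}=\beta$, the hole is $[0,\beta^{1-N}]$, and every $y\in[\beta^{-N},\beta^{1-N})$ lies in the hole while its greedy expansion begins $0^{N-1}1$, hence need not contain the block $0^N$ at all. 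Consequently $F_{\beta_n,N}\subseteq\pi_{\beta_n}(\Sigma_{n,N})$ but the reverse inclusion fails, so your identification $F_{\beta_n,N}=\pi_{\beta_n}(\Sigma_{n,N})$ (and the exact dimension formula it would yield) is incorrect; this is also why the paper's characteristic polynomial in Lemma \ref{lem:39} has return times $2,\dots,N-1$ rather than $2,\dots,N$. The repair is routine for the limit you actually need: a short computation shows that if $G^k(x)$ lies in the hole then the greedy expansion does contain a block $0^{N-c}$ starting at position $k+1$, for a constant $c$ depending only on $n$ (with the closed right endpoint of the hole contributing only a countable exceptional set, handled in the paper's Lemma \ref{lem:37}). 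This sandwiches $F_{\beta_n,N}$ between the projections of two subshifts whose entropies both tend to $\log\beta_n$, and the conclusion $\lim_{N\to\infty}\dim_H F_{\beta_n,N}=1$ follows. You should also tighten Step 2: agreement of word counts for $T<N$ does not by itself control $\inf_T\frac1T\log\#W_T$; either argue via the explicit characteristic equation $1=\sum_{\ell}x^{-\ell}$ over admissible return times (as the paper does), or track how the comparison constants depend on $N$ (they grow only polynomially in the matrix size, so the argument can be salvaged).
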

For simplicity, we assume $n=2$, for $n\geq3$ the proof is similar but the calculation is more complicited. 
We give an outline of the proof of this lemma. First, we give a Markov partition for $[0,(\beta-1)^{-1}]$ using the orbit of $1$.  Hence, we can define an adjacency matrix $S$ and construct an associated subshift of finite type $\Sigma$. Equivalently, we transform the original space $\{0,1\}^{\mathbb{N}}$ into a subshift of finite type. 
Next, we define a submatrix $S^{'}$ of $S$, and construct a graph-directed self-similar set with the open set condition \cite{MW}. Finally, we identify $F_{\beta_n, N}$ with a graph-directed self-similar set, and prove the desired result. 
Now we transform the symbolic space as follows.
 \begin{Lemma}\label{lem:35}
Let $\beta=\dfrac{1+\sqrt{5}}{2}$, and $x\in [0,(\beta-1)^{-1}]$. Then the greedy expansion of  $x$ has a coding which is from some subshift of finite type. 
 \end{Lemma}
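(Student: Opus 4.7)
The plan is to exploit the fact that for $\beta=(1+\sqrt{5})/2$ the greedy orbit of the endpoint $1$ is finite, which gives a Markov partition of $E=[0,(\beta-1)^{-1}]=[0,\beta]$ on which the greedy map $G$ acts. The associated adjacency matrix then presents the subshift of finite type $\Sigma$ that the lemma asks for, and the greedy expansion of any $x$ becomes the labelling of an infinite walk in $\Sigma$.

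First I would compute the greedy orbit of $1$: using $\beta^{2}=\beta+1$, one has $G(1)=\beta-1=1/\beta$, $G(1/\beta)=1-1=0$ and $G(0)=0$, so the orbit is the finite set $\{0,1/\beta,1\}$. Together with the right endpoint $\beta$, these points cut $E$ into three cells
\[
I_{1}=[0,1/\beta),\qquad I_{2}=[1/\beta,1),\qquad I_{3}=[1,\beta],
\]
and a short direct calculation (again using $\beta^{2}=\beta+1$) shows $G(I_{1})=I_{1}\cup I_{2}$ (greedy digit $0$), $G(I_{2})=I_{1}$ (digit $1$) and $G(I_{3})=I_{2}\cup I_{3}$ (digit $1$). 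The partition is therefore genuinely Markov for $G$, and the $3\times 3$ adjacency matrix
\[
S=\begin{pmatrix}1&1&0\\ 1&0&0\\ 0&1&1\end{pmatrix},\qquad S_{ij}=1 \iff I_{j}\subseteq G(I_{i}),
\]
defines the one-sided subshift of finite type
\[
\Sigma=\bigl\{(s_{n})_{n\geq 0}\in\{1,2,3\}^{\mathbb{N}}:S_{s_{n}s_{n+1}}=1\text{ for all }n\bigr\}.
\]

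To finish I would set up the coding. For $x\in E$, define $s_{n}(x)=j$ whenever $G^{n}(x)\in I_{j}$; by the Markov property $(s_{n}(x))_{n\geq 0}\in\Sigma$, and conversely every element of $\Sigma$ is the itinerary of some $x\in E$. The greedy digit $a_{n+1}$ is a $1$-block function of $s_{n}(x)$, namely $\ell(1)=0$ and $\ell(2)=\ell(3)=1$. Thus the greedy expansion of $x$ is exhibited as the $\ell$-image of an infinite walk in the SFT $\Sigma$, which is exactly the conclusion of the lemma and also the starting point for the matrix analysis announced in the outline.

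The only real obstacle is the bookkeeping at the Markov endpoints $0,1/\beta,1$: one must fix a consistent half-open/closed convention so that every $x\in E$ has a single well-defined state itinerary and the digit recoding $\ell$ is unambiguous. Beyond this routine convention, the proof is a direct unpacking of the Fibonacci relation $\beta^{2}=\beta+1$ and requires no delicate estimates.
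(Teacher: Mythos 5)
Your proof is correct for the statement as written, and it is the same idea as the paper's: use the finiteness of the greedy orbit of $1$ at the golden ratio to build a Markov partition, read off the adjacency matrix, and realize greedy expansions as $1$-block images of itineraries in the resulting subshift of finite type. The difference is in the choice of partition. You take the minimal partition $\{[0,\beta^{-1}),[\beta^{-1},1),[1,\beta]\}$ generated by the orbit $1\mapsto\beta^{-1}\mapsto 0$, which is the most economical way to prove the literal claim. The paper instead refines $[0,\beta^{-1}]$ by the points $\beta^{-N+i}(\beta-1)^{-1}$, producing $N+1$ cells $A_1,\dots,A_{N+1}$ with $A_1=[0,\beta^{-N}(\beta-1)^{-1}]$ equal to the hole. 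That refinement is invisible in the statement of the lemma but is the entire point of its role in the paper: Lemmas \ref{lem:37} and \ref{lem:39} compute $\dim_H(F_{\beta,N})$ by deleting the state $A_1$ from the adjacency matrix and taking the spectral radius of the surviving submatrix, which requires the hole to be a union of partition elements. With your three-cell partition the hole $[0,\beta^{-N}(\beta-1)^{-1}]$ sits strictly inside $I_1$, so the survivor set $F_{\beta,N}$ is not obtained by forbidding a state, and the subsequent dimension computation would not go through without first refining your partition (your cells $I_2,I_3$ reappear as the paper's $A_N, A_{N+1}$, and $I_1$ is split into $A_1,\dots,A_{N-1}$). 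So: correct as a standalone proof of the lemma, but you would need the finer partition to continue the argument.
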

\begin{proof}
Firstly, we give a Markov partition for the interval $[0,(\beta-1)^{-1}]$ as follows:
let $$a_1=0,a_i=\beta^{-N-2+i}(\beta-1)^{-1}, 2\leq i\leq n-1,a_{n}=\beta^{-1}=\beta^{-2}(\beta-1)^{-1},$$ $a_{n+1}=1, a_{n+2}=(\beta-1)^{-1}.$
Define
$$A_{1}=[0,\beta^{-N}(\beta-1)^{-1}],A_i=[\beta^{-N+i-2}(\beta-1)^{-1},\beta^{-N+i-1}(\beta-1)^{-1}], 2\leq i\leq N,$$ $A_{N+1}=[1,( \beta-1)^{-1}]$.
It is easy to check that 
$$T_0(A_1)=A_1\cup A_2, T_0(A_i)=A_{i+1}, 2\leq i\leq N-1,$$ 
and that 
$T_1(A_N)=\cup_{i=1}^{N-1}A_i,$ $T_1(A_{N+1})=A_N\cup A_{N+1}.$
Hence, we have the following adjacency matrix $S=(s_{ij})_{(N+1)\times (N+1)}$
\begin{equation*}
s_{ij}=\left\lbrace\begin{array}{cc}
                 1&  i=1, j=1,2\\

               1&  2\leq i\leq N-1, j=i+1\\
               1&  i= N, j=1,2,\cdots, N-1\\
               1&  i= N+1, j=N,N+1\\
               0&   else\\
                \end{array}\right.
\end{equation*}
In terms of $S$, we can construct a subshift of finite type $\Sigma$. 
For any $$(\alpha_{i})\in \{1,2,\cdots,N+1\}^\mathbb{N},$$
we call $\{A_{\alpha_{i}}\}_{i=1}^{\infty}$ an admissible path if there is some $T_k, $ $k=0$ or $1$, such that $$T_k(A_{\alpha_{i}})\supset A_{\alpha_{i+1}}$$ for any $i\geq 1.$ In terms of this definition, we have that 
$$\Sigma=\{(\alpha_{i})_{i=1}^{\infty}:\alpha_i\in\{1,2,\cdots,N+1\}, \{A_{\alpha_{i}}\}_{i=1}^{\infty} \mbox{ is an admissible path}\}.$$
\end{proof}
\setcounter{Remark}{5}
\begin{Remark}
Usually, we take the elements of Markov partition $A_i$ closed on the left and open on the right, i.e. $A_i=[a_i, a_{i+1})$.  Under our algorithm, one has a choice at the endpoints. For example the point $\beta^{-1}$ it is the right endpoint of $A_{N-3}$ and the left endpoint of $A_{N-2}$. For this point, we can implement $T_0$ on $A_k=[a_k, \beta^{-1}]$ or $T_1$ on $[\beta^{-1}, a_{k+2}]$. This adjustment is due to the proof of Lemma \ref{lem:39}. When we construct a graph-directed self-similar set, we need the closed interval, see the graph-directed construction in \cite{MW}.  This is the reason why we need some compromise here. 
Although
 our Markov partition is a little diffferent from the usual definition,  this adjustment does not affect our result.
\end{Remark}
By definition of  $E_{\beta, N}$, for any point $x\in E_{\beta, N}$,  all possible orbits of $x$ do not hit  the hole $A_{1}=[0,\beta^{-N}(\beta-1)^{-1}]$, which is the first element of the Markov partition. By Lemma \ref{lem:35}, $x$ also has a coding in the new symbolic space $\Sigma.$
For simplicity, we denote this coding of $x$ in $\Sigma$ by $\{\alpha_{i_n}\}_{n=1}^{\infty}$. Since $x\in E_{\beta, N}$,  the symbol $1$ cannot appear in the coding $\{\alpha_{i_n}\}_{n=1}^{\infty}$.  Motivated by this observation, we construct a new matrix as follows. We delete the first row and  first column of $S$, and keep the rest of the matrix.   Denote the new resulting matrix by $S^{'}$, and the associated subshift generated by $S^{'}$ is denoted by $\Sigma^{'}$.  $S^{'}$ can be represented by a directed graph $(V, E)$.  The vertex set consists of the underlying partition  $\{A_{i}\}_{i=2}^{k}$.
For two vertices, if one vertex  is one of components of  the  image of another vertex, then we can find a similitude, which is the inverse of the expanding map, between these two vertices. For instance, for the vertices $A_2$ and $A_3$, if $T_0(A_2)=A_3$, then we can label a directed edge, from the vetex $A_2$ to $A_3$, by a similitude $f(x)=T^{-1}_0(x)=\dfrac{x}{\beta}$. We denote all admissible labels between two vertices by $E$. Then by Mauldin and Williams' result \cite{MW}, we can construct a graph-directed self-similar set $K_N^{'}$ satisfying the open set condition, for the detailed construction,  see  \cite{MW, KarmaKan}. 
 Now we have the following lemma.
 \setcounter{Lemma}{6}
 \begin{Lemma}\label{lem:37}
Let $\beta=\dfrac{1+\sqrt{5}}{2}.$  $F_{\beta, N}=K_{N}^{'}$ except for a countable set, i.e. there exists a countable set $C_1$ such that 
$F_{\beta, N}\subset K_{N}^{'}\subset C_1\cup F_{\beta, N}$. 
\end{Lemma}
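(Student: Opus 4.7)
The plan is to prove the two inclusions $F_{\beta, N} \subseteq K_N'$ and $K_N' \subseteq C_1 \cup F_{\beta, N}$ by identifying points with their Markov codings in $\Sigma$ and $\Sigma'$ from Lemma \ref{lem:35}, and to exhibit $C_1$ as the countable set of orbits that eventually land on a boundary point of the Markov partition.

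For the forward inclusion, suppose $x \in F_{\beta, N}$. Then the greedy orbit $\{G^n(x)\}_{n \geq 0}$ stays in $\bigcup_{i=2}^{N+1} A_i$, so Lemma \ref{lem:35} provides an admissible path $(\alpha_n) \in \Sigma$ for this orbit, and because $A_1$ is never visited we actually have $(\alpha_n) \in \Sigma'$. By the Mauldin--Williams construction \cite{MW}, $x$ equals the address of this path in the graph-directed IFS, so $x \in K_N'$. For the reverse inclusion, let $x \in K_N'$ be the address of some $(\alpha_n) \in \Sigma'$. Since the adjacency matrix $S$ records only the greedy branches ($T_0$ on $A_i$ for $i \leq N-1$, and $T_1$ on $A_N, A_{N+1}$), this path corresponds to a greedy-style itinerary $G^n(x) \in A_{\alpha_n}$. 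If $(\alpha_n)$ coincides with the greedy $\Sigma$-coding of $x$, then the greedy orbit misses $A_1$ and $x \in F_{\beta, N}$. Otherwise $x$ admits two distinct $\Sigma$-codings, and the only way this can occur is for some iterate $G^k(x)$ to equal one of the finitely many interior Markov endpoints $a_2, \ldots, a_{N+1}$, because off these endpoints the $\Sigma$-transitions are single-valued. Setting
\[
C_1 := \bigcup_{k \geq 0} G^{-k}\bigl(\{a_2, \ldots, a_{N+1}\}\bigr),
\]
we obtain $K_N' \setminus F_{\beta, N} \subseteq C_1$, and $C_1$ is countable because each $G^{-k}$ of a single point contains at most $2^k$ pre-images.

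The main obstacle is to justify that all non-uniqueness of the $\Sigma$-coding of a single $x$ traces back to this boundary ambiguity, rather than to a genuinely non-greedy trajectory on the switch region. The key point is the way $S$ was built in the proof of Lemma \ref{lem:35}: only $T_0$-transitions from $A_i$ with $i \leq N-1$ and only $T_1$-transitions from $A_N, A_{N+1}$ are included, so every admissible path in $\Sigma$ is already greedy-style. Combined with the convention in the preceding remark about treating the Markov elements as closed intervals overlapping only on the finite set $\{a_2, \ldots, a_{N+1}\}$, this pins down the source of non-uniqueness precisely to $C_1$ and gives the desired countability.
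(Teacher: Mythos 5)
Your proof is correct and follows essentially the same route as the paper's: both inclusions come from reading greedy itineraries off the Markov/graph-directed structure, and the discrepancy $K_N'\setminus F_{\beta,N}$ is absorbed into a countable set of preimages of partition endpoints (the paper uses only the IFS-preimages $\bigcup_n\bigcup_{(i_1\cdots i_n)}f_{i_1\cdots i_n}(\beta^{-N}(\beta-1)^{-1})$ of the single hole endpoint, while you use greedy preimages of all the $a_i$; both are countable and both work). One small imprecision: in your first branch, a point whose $\Sigma'$-coding avoids the symbol $1$ may still have its greedy orbit land exactly on the shared endpoint $a_2=\beta^{-N}(\beta-1)^{-1}$, which lies in the \emph{closed} hole, so such an $x\notin F_{\beta,N}$ even though the coding is greedy --- but since $G^k(x)=a_2$ places $x$ in your $C_1$, the final containment is unaffected.
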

\begin{proof}
Evidently, $F_{\beta, N}\subset K_{N}^{'}.$
Take $x\in K_{N}^{'}$. Then by the definition of $K_{N}^{'}$, the greedy orbit of $x$ does not hit $[0,\beta^{-N}(\beta-1)^{-1})$. If the greedy orbit of $x$ does not hit the closed interval $[0,\beta^{-N}(\beta-1)^{-1}]$, 
then $x\in F_{\beta, N}$. 
If there exists some $(i_1i_2\cdots i_{n_0})$ such that 
$$T_{i_1i_2\cdots i_{n_0}}(x)=\beta^{-N}(\beta-1)^{-1},$$
then $$x\in \cup_{n=1}^{\infty}\cup_{(i_1\cdots i_n)\in \{0,1\}^n}f_{i_1\cdots i_n}(\beta^{-N}(\beta-1)^{-1}),$$ where $f_0(x)=\beta^{-1}x, f_1(x)=\beta^{-1}x+\beta^{-1}.$
Therefore,
$$K^{\prime}_{ N}\subset E_{\beta, N}\cup \cup_{n=1}^{\infty}\cup_{(i_1\cdots i_n)\in \{0,1\}^n}f_{i_1\cdots i_n}(\beta^{-N}(\beta-1)^{-1}).$$
\end{proof}
\begin{Lemma}\label{lem:39}
Let $\beta=\dfrac{1+\sqrt{5}}{2}.$  
Then $$\dim_{H}(F_{\beta, N})=\dfrac{\log \lambda_N}{\log \beta},$$ where $\lambda_N$ is the largest positive root of  the following equation 
$$x^{N-1}=\sum_{i=0}^{N-3}x^{i}. $$
Moreover,
$\lim\limits_{N\to \infty}\lambda_N=\dfrac{1+\sqrt{5}}{2}=\beta$. 
\end{Lemma}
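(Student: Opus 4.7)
The plan is to apply the Mauldin--Williams dimension formula to the graph-directed IFS $K_N'$, compute the spectral radius of its adjacency matrix, and then analyze the resulting characteristic equation asymptotically. By Lemma \ref{lem:37}, $F_{\beta,N}$ and $K_N'$ differ by at most a countable set, so $\dim_{H}(F_{\beta,N})=\dim_{H}(K_N')$. Every directed edge in the graph underlying $S'$ is labelled by one of the similitudes $f_{\varepsilon}(x)=(x+\varepsilon)/\beta$, $\varepsilon\in\{0,1\}$, so every contraction ratio equals $1/\beta$. Since the construction satisfies the open set condition (as recorded just before Lemma \ref{lem:37}), the Mauldin--Williams theorem \cite{MW} yields
\[
\dim_{H}(K_N')=\frac{\log \rho(S')}{\log \beta},
\]
where $\rho(S')$ denotes the spectral radius of $S'$.

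To compute $\rho(S')$, relabel the vertices as $B_i=A_{i+1}$, $i=1,\dots,N$. Then $\{B_N\}$ is an isolated one-vertex loop whose attractor is a single point, and the strongly connected component on $\{B_1,\dots,B_{N-1}\}$ with adjacency matrix $R$ carries the essential dynamics (note $S'$ is block upper triangular after reordering, so $\rho(S')=\max(1,\rho(R))$). The eigenvalue equation $Rv=\lambda v$ gives, via the shift rows $v_{i+1}=\lambda v_i$ for $1\leq i\leq N-2$ and the reset row $v_1+v_2+\cdots+v_{N-2}=\lambda v_{N-1}=\lambda^{N-1}v_1$, the characteristic equation
\[
\sum_{i=0}^{N-3}\lambda^i=\lambda^{N-1}.
\]
By Perron--Frobenius the largest positive root $\lambda_N$ is exactly $\rho(R)$, and since $\lambda_N\geq 1$ this equals $\rho(S')$, which yields the stated dimension formula.

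For the limit, multiplying the defining equation by $\lambda-1$ gives $\lambda^{N-2}(\lambda^2-\lambda-1)=-1$. Plugging $x=\beta$ into $x^{N-1}-\sum_{i=0}^{N-3}x^i$ and using $\beta^2=\beta+1$ together with $(\beta-1)^{-1}=\beta$ yields the value $\beta>0$, showing $\lambda_N<\beta$. A comparison of $h_N(x):=1-(x^{-2}+\cdots+x^{-(N-1)})$ at consecutive $N$ (each $h_N$ is strictly increasing on $(1,\infty)$ and $h_{N+1}<h_N$) gives $\lambda_N<\lambda_{N+1}$. Being bounded above by $\beta$ and increasing, $(\lambda_N)$ converges to some $L\in(1,\beta]$, and $\lambda_N^{-(N-2)}\to 0$ forces $L^2-L-1=0$, hence $L=\beta$.

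The main obstacle is that $S'$ is reducible, so one must correctly identify the strongly connected component $R$ whose spectral radius governs $\dim_{H}(K_N')$ and verify that the parasitic eigenvalue $1$ contributed by $\{B_N\}$ is subdominant (automatic once $\lambda_N>1$, i.e.\ for $N\geq 4$); once this block decomposition is made, the clean shift-plus-reset structure of $R$ makes the characteristic polynomial drop out at once, and the asymptotic analysis reduces to the elementary observation that $\lambda^2-\lambda-1$ is forced to zero as the correction $-\lambda^{-(N-2)}$ vanishes.
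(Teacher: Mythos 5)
Your proof is correct and follows essentially the same route as the paper: the paper likewise reduces to $K_N'$ via Lemma \ref{lem:37}, invokes Mauldin--Williams to identify the dimension with $\log\rho(S')/\log\beta$, and leaves the spectral-radius computation and the limit as exercises, which you carry out correctly. (One cosmetic slip: the vertex $B_N$ is not isolated --- it has an out-edge to $B_{N-1}$, so its attractor is not a single point --- but since your identity $\rho(S')=\max(1,\rho(R))$ rests on the block-triangular decomposition rather than on that remark, nothing is affected.)
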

\begin{proof}
By Lemma \ref{lem:37}, $\dim_{H}(F_{\beta, N})=\dim_{H}(K^{'}_N).$
$K^{'}_N$  is a graph-directed self-similar set with the open set condition, as such we can  explicitly calculate its Hausdorff dimension, namely, $\dim_{H}(F_{\beta, N})=\dfrac{\log \lambda_N}{\log \beta}$, where  $\lambda_N$ is indeed the spectral radius of $S^{'}$, for the detailed method, see \cite{MW}. The second statement is a simple exercise. We finish the proof of Lemma \ref{lem:34} for the case $n=2$. For $n\geq 3$, the proof is similar. 
\end{proof}
Similar result is available for the doubling map with hole \cite{GlendinningSidorov}. 
Let 
$D(x)=2x \mod 1$ be the doubling map defined on $[0,1)$. Given any $\epsilon>0$, 
set 
$$D_{\epsilon}=\{x\in [0,1): D^{n}(x)\notin [0, \epsilon] \mbox{ for any } n\geq 0\}.$$
Clearly $\lim\limits_{\epsilon\to 0}\dim_{H}(D_{\epsilon})$ exists. Hence, it  suffices to consider the following set 
$$D_{2^{-N}}=\{x\in [0,1): D^{n}(x)\notin [0, 2^{-N}] \mbox{ for any } n\geq 0\}$$
if we want to find  $\lim\limits_{\epsilon\to 0}\dim_{H}(D_{\epsilon})$. 
We have the following result. 
\setcounter{Example}{8}
\begin{Example}
$$\dim_{H}(D_{2^{-N}})=\dfrac{\log \gamma_N}{\log 2},$$
where $\gamma_N$ is the N-bonacci number satisfying the equation 
$$x^{N}=x^{N-1}+x^{N-2}+\cdots+x+1.$$
It is easy to see that $\lim\limits_{N\to \infty} \gamma_N=2$. Therefore, 
$$\lim\limits_{\epsilon\to 0}\dim_{H}(D_{\epsilon})=\lim\limits_{N\to \infty}\dim_{H}(D_{2^{-N}})=1.$$
\end{Example}
\begin{proof}[\bf{Proof of Theorem \ref{MainTheorem}}]
Let $\beta_n$ be a  $n$-bonacci number. 
By Lemmas \ref{lem:34} and  \ref{lem:37}, we have 
 $$E_{\beta_n, N}\subset F_{\beta_n, N} \subset K_{N}^{'} \subset  F_{\beta_n, N} \cup C_1.$$
 Finally,  by Lemma \ref{lem:34},
$$\lim_{N\to \infty}\dim_{H}(F_{\beta_n, N})=1.$$
Therefore, $$\dim_{H}(F_{\beta_n, N})= \dim_{H}(E_{\beta_n, N})\leq \dim_{H}(V_{\beta_n})\leq 1 ,$$
which implies that 
$\dim_{H}(V_{\beta_n})=1$. 
\end{proof}

It is easy to show that when $\beta$ is a Pisot number, then all the possible orbits of  $x\in \mathbb{Q}([\beta])\cap [0,(\beta-1)^{-1}]$ hit finitely many points only.  The following lemma is standard. However for the sake of convenience, we give the detailed proof. 
\setcounter{Lemma}{9}
\begin{Lemma}\label{lem:310}
Suppose $\beta$ is a  Pisot number and  $x\in \mathbb{Q}([\beta])\cap [0,(\beta-1)^{-1}]$, then the set 
$$\{\pi(K^{n}(\omega, x)):n\geq 0, \omega\in \Omega\}$$
is a  finite set.
\end{Lemma}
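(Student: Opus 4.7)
The plan is to exploit the fact that elements of the $K$-orbit can be written as polynomials in $\beta$ with bounded coefficients, then use the Pisot property to bound all Galois conjugates and conclude via a lattice argument that only finitely many such points exist.

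First I would unfold the definition of $K$: for any $\omega\in\Omega$ and any $n\geq 0$, the projection $\pi(K^n(\omega,x))$ has the form
\[
\pi(K^n(\omega,x)) \;=\; \beta^n x - \sum_{i=1}^{n} a_i\, \beta^{n-i},
\]
where each $a_i\in\{0,1\}$ is a digit chosen by the random algorithm (possibly depending on $\omega$ and on $x$). Since $x\in\mathbb{Q}(\beta)$, there exists a positive integer $q$ (a common denominator of the coordinates of $x$ in the basis $1,\beta,\dots,\beta^{d-1}$, where $d=\deg\beta$) such that $qx\in\mathbb{Z}[\beta]$. Then every orbit point lies in the discrete module $q^{-1}\mathbb{Z}[\beta]$.

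Next I would invoke the Pisot property. Let $\beta=\beta^{(1)},\beta^{(2)},\dots,\beta^{(d)}$ be the Galois conjugates of $\beta$, with $|\beta^{(j)}|<1$ for $j\geq 2$. For a fixed orbit point $y=\beta^n x-\sum_{i=1}^n a_i\beta^{n-i}$, the $j$-th conjugate is
\[
y^{(j)} \;=\; (\beta^{(j)})^n x^{(j)} - \sum_{i=1}^{n} a_i\, (\beta^{(j)})^{n-i}.
\]
For $j\geq 2$, we have $|(\beta^{(j)})^n x^{(j)}|\leq |x^{(j)}|$ and, since $a_i\in\{0,1\}$,
\[
\Bigl|\sum_{i=1}^{n} a_i\, (\beta^{(j)})^{n-i}\Bigr| \;\leq\; \sum_{k=0}^{\infty}|\beta^{(j)}|^k \;=\; \frac{1}{1-|\beta^{(j)}|}.
\]
Hence $|y^{(j)}|$ is bounded by a constant $C_j$ depending only on $\beta$ and $x$, not on $n$ or $\omega$. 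On the real coordinate $j=1$ we already know $y\in[0,(\beta-1)^{-1}]$ since $K$ preserves this interval.

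Finally I would conclude with the Minkowski-type lattice argument. Consider the embedding
\[
\iota:\mathbb{Q}(\beta)\longrightarrow \mathbb{R}^{r_1}\times\mathbb{C}^{r_2}\cong\mathbb{R}^d, \qquad y\longmapsto (y^{(1)},y^{(2)},\dots,y^{(d)}).
\]
The image $\iota(q^{-1}\mathbb{Z}[\beta])$ is a lattice in $\mathbb{R}^d$, hence a discrete subset. By the previous step, $\iota$ sends every orbit point into the bounded box
\[
[0,(\beta-1)^{-1}]\times\prod_{j=2}^{d}\{z:|z|\leq C_j\}.
\]
The intersection of a discrete set with a bounded set is finite, which proves the claim. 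The only step requiring care is verifying that $\iota(q^{-1}\mathbb{Z}[\beta])$ is indeed a lattice (this is the standard Minkowski embedding of a number field, extended by the scalar $q^{-1}$); everything else is an immediate consequence of the Pisot inequality $|\beta^{(j)}|<1$ for $j\geq 2$, so there is no serious obstacle.
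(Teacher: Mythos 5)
Your proposal is correct and follows essentially the same strategy as the paper: write the orbit point as $\beta^n x-\sum a_i\beta^{n-i}$, bound every Galois conjugate uniformly (the Pisot inequality for $j\geq 2$, the invariant interval for $j=1$), and conclude from discreteness of the underlying $\mathbb{Z}$-module. The paper merely carries out the discreteness step by hand, via an induction producing integer coordinate vectors and the invertibility of the conjugate matrix, where you instead cite the standard fact that the Minkowski embedding of $q^{-1}\mathbb{Z}[\beta]$ is a lattice.
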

\begin{proof}
Let $M(X)=X^d-q_1X^{d-1}-\cdots-q_d$ be the minimal polynomial of $\beta$ with $q_i\in \mathbb{Z}.$ Since 
$\mathbb{Q}([\beta])$ is generated by $\{\beta^{-1},\cdots, \beta^{-d}\}$, there exist $a_1, a_2\cdots, a_d\in \mathbb{Z}$ and $b\in \mathbb{N}$ such that 
$$x=b^{-1}\sum_{i=1}^{d}a_i\beta^{-i}.$$
We assume that $b$ is as small as possible to ensure uniqueness. Let $\beta_1=\beta,$ and $\beta_2, \cdots, \beta_d$ the Galois conjugates of $\beta$, and set $B=(\beta_{j}^{i})_{1\leq i,j\leq d}$. Define for $n\geq 0$ and $\omega\in \Omega,$
$$r_n^{(1)}(\omega)=\beta^n\left(x-\sum_{k=1}^{n}b_k(\omega, x)\beta^{-k}\right)$$
and 
$$r_n^{(j)}(\omega)=\beta_{j}^{n}\left(b^{-1}\sum_{i=1}^{d}a_i\beta_{j}^{-i}-\sum_{k=1}^{n}b_k(\omega, x)\beta_{j}^{-k}\right)$$
for $j=2,3,\cdots, d.$ 
Consider the vector $R_n(\omega)=(r_{n}^{(1)}(\omega), \cdots, r_{n}^{(d)}(\omega)).$
We first show that the set $\{R_n(\omega):n\geq 0, \omega\in \Omega\}$ is uniformly bounded (in $n$ and $\omega$). First note that 
$r_{n}^{(1)}(\omega)=\pi(K^{n}(\omega, x))$, hence $|r_{n}^{(1)}(\omega)|\leq (\beta-1)^{-1}$ for any $n$ and any $\omega.$ Let $\eta=\max_{2\leq j\leq d}|\beta_j|$, then $\eta<1$. For $j=2,\cdots, d$
\begin{eqnarray*}
|r_{n}^{(j)}|&=& \left|\left(b^{-1}\sum_{i=1}^{d}a_i\beta_{j}^{n-i}-\sum_{k=1}^{n}b_k(\omega, x)\beta_{j}^{n-k}\right)\right|\\&\leq
&\left|\left(b^{-1}\sum_{i=1}^{d}|a_i|\eta^{n-i}\right)\right|+\left|\left( \sum_{k=1}^{n}b_k(\omega, x)\eta^{n-k}\right)\right|\\&\leq&
\dfrac{b^{-1}\max_{1\leq i\leq d}|a_i|+1}{1-\eta}
\end{eqnarray*}
Let $C=\max\left\{(\beta-1)^{-1},  \dfrac{b^{-1}\max_{1\leq i\leq d}|a_i|+1}{1-\eta}\right\}$, then $r_n^{(j)}<c$
for any $1\leq j\leq d, n\geq 0$ and $\omega\in \Omega.$ Thus the set $\{R_n(\omega):n\geq 0, \omega\in \Omega\}$ is uniformly bounded. Next we show that for each $\omega\in \Omega$ and $n\geq 0$, there exists $P(X)\in \mathbb{Z}_{n}(\omega)\in \mathbb{Z}^{d}$, then $\beta_2,\cdots, \beta_d$ are also roots of $P(X)$, it suffices to show that 
$$r_n^{(1)}=b^{-1}\left(\sum_{k=1}^{d}z_n^{k}(\omega)\beta^{-k}\right)$$
for $z_n^{k}\in \mathbb{Z}.$ The proof is done by contradiction. Let $n=1$ and note that $1=q_1\beta^{-1}+\cdots+q_d\beta^{-d}.$ Now
\begin{eqnarray*}
r_{1}^{(1)}(\omega)&=& \beta x- b_1(\omega, x)\\&=
&\beta b^{-1}\sum_{k=1}^{d}a_k\beta^{-k}-b_1(\omega, x) \sum_{k=1}^{d}q_k\beta^{-k}\\&=&
b^{-1}(\sum_{k=1}^{d-1}(a_1q_k-b_1(\omega, x)bq_k+a_{k+1})\beta^{-k}+(a_1-b_1(\omega, x)b)q_d\beta^{-d} )\\&=&b^{-1}\sum_{k=1}^{d}z_{1}^{(k)}(\omega)\beta^{-k}\end{eqnarray*}
with 
\begin{equation*}
z_{1}^{(k)}(\omega)=\left\lbrace\begin{array}{cc}
                 (a_1-b_1(\omega, x)b)q_k+a_{k+1}&  \mbox{ if } k\neq d\\

               (a_1-b_1(\omega, x)b)q_d&  \mbox{ if } k= d\\
                \end{array}\right.
\end{equation*}
Suppose now that 
$r_i^{(1)}=b^{-1}\sum_{k=1}^{d}z_{i}^{(k)}(\omega)\beta^{-k}$ for $z_{i}^{(k)}\in \mathbb{Z}.$ Since $r_{i}^{(1)}=\pi(K^{n}(\omega, x))$ for all $n\geq 0,$ we have 
\begin{eqnarray*}
r_{i+1}^{(1)}&=& \beta r_{i}^{(1)}-b_{i+1}(\omega, x)\\&=
&\beta b^{-1}\sum_{k=1}^{d}z^{(k)}_{i}(\omega)\beta^{-k}-b_{i+1}(\omega, x) \sum_{k=1}^{d}q_k\beta^{-k}\\&=&
b^{-1}(\sum_{k=1}^{d-1}(z^{(1)}_{i}q_k-b_{i+1}(\omega, x)bq_k+z^{(k+1)}_{i})\beta^{-k}+(z^{(1)}_{i}(\omega)-b_{i+1}(\omega, x)b)q_d\beta^{-d} )\\&=&b^{-1}\sum_{k=1}^{d}z_{i+1}^{(k+1)}(\omega)\beta^{-k}
\end{eqnarray*}
with 
\begin{equation*}
z_{i+1}^{(k)}(\omega)=\left\lbrace\begin{array}{cc}
                 (z_{i}^{(1)}(\omega)-b_{i+1}(\omega, x)b)q_k+z_{i}^{(k+1)}(\omega)&  \mbox{ if } k\neq d\\

               (z_{i}^{(1)}(\omega)-b_{i+1}(\omega, x)b)q_d&  \mbox{ if } k= d\\
                \end{array}\right.
\end{equation*}
Thus, $z_{i+1}^{(k)}(\omega)\in \mathbb{Z}.$ Setting $\mathbb{Z}(\omega)=(z_n^{(1)}, \cdots,z_n^{(d)}), $ we have $\mathbb{Z}(\omega)\in \mathbb{Z}^{d}$ and $R_n(\omega)=b^{-1}\mathbb{Z}(\omega)B$. Since $B$ is invertible, and $R_n(\omega)$ is uniformly bounded in $n$ and $\omega$, we have that $\mathbb{Z}(\omega)$ is uniformly bounded, and hence takes only finitely many values. It follows that $(R_n(\omega))$ takes only finitely many values. Therefore, the set 
$$\{\pi(K^{n}(\omega, x)):n\geq 0, \omega\in \Omega\}$$
 is finite. 
\end{proof}
\setcounter{Corollary}{10}
\begin{Corollary}\label{cor:36}
Let $\beta\in(1,2)$ be a Pisot number. For any  $a_{i_1}a_{i_2}\cdots a_{i_n}\in \{0,1\}^n$, the orbits of the endpoints of the interval    $f_{a_{i_1}a_{i_2}\cdots a_{i_n}}([0,(\beta-1)^{-1}])$ hit finite points.
\end{Corollary}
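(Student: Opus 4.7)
The plan is to reduce the statement to Lemma \ref{lem:310} by showing that both endpoints of the interval $f_{a_{i_1}a_{i_2}\cdots a_{i_n}}([0,(\beta-1)^{-1}])$ lie in $\mathbb{Q}(\beta)\cap[0,(\beta-1)^{-1}]$.

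First I would observe that each similitude $f_j(x)=(x+j)/\beta$ with $j\in\{0,1\}$ sends $\mathbb{Q}(\beta)$ into itself, since $\beta^{-1}\in\mathbb{Q}(\beta)$ and $j\in\mathbb{Z}\subset\mathbb{Q}(\beta)$. Consequently any finite composition $f_{a_{i_1}a_{i_2}\cdots a_{i_n}}$ also preserves $\mathbb{Q}(\beta)$. Moreover, because each $f_j$ is strictly increasing, the image of the interval $[0,(\beta-1)^{-1}]$ under $f_{a_{i_1}a_{i_2}\cdots a_{i_n}}$ is the closed interval whose endpoints are exactly $f_{a_{i_1}\cdots a_{i_n}}(0)$ and $f_{a_{i_1}\cdots a_{i_n}}((\beta-1)^{-1})$.

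Next I would note that $(\beta-1)^{-1}\in\mathbb{Q}(\beta)$ trivially, so both endpoints lie in $\mathbb{Q}(\beta)\cap[0,(\beta-1)^{-1}]$; in fact one can write each endpoint explicitly as a polynomial in $\beta^{-1}$ with integer coefficients plus a rational multiple of $(\beta-1)^{-1}$, which makes it manifest that they belong to $\mathbb{Q}(\beta)$.

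Finally, applying Lemma \ref{lem:310} to each of the two endpoints separately, the set $\{\pi(K^{m}(\omega,x)):m\geq 0,\ \omega\in\Omega\}$ is finite for each endpoint $x$. Taking the union of these two finite sets yields the claim. I do not anticipate any real obstacle here: the whole content of the corollary is that $f_{a_{i_1}\cdots a_{i_n}}$ sends the algebraic point $(\beta-1)^{-1}$ (and the point $0$) to another point of $\mathbb{Q}(\beta)$, so that Lemma \ref{lem:310} applies verbatim.
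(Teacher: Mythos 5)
Your proposal is correct and follows essentially the same route as the paper: both reduce the claim to Lemma \ref{lem:310} by observing that the endpoints of $f_{a_{i_1}\cdots a_{i_n}}([0,(\beta-1)^{-1}])$ lie in $\mathbb{Q}(\beta)\cap[0,(\beta-1)^{-1}]$. The only cosmetic difference is that the paper writes the left endpoint explicitly as $\sum_{j=1}^{n}a_{i_j}\beta^{-j}$ and appeals to symmetry for the right one, whereas you handle both endpoints uniformly by noting that each $f_j$ preserves $\mathbb{Q}(\beta)$ --- a slightly more explicit but equivalent argument.
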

\begin{proof}
By symmetry, we only need to prove  that for the left endpoint 
$\sum_{j=1}^{n}a_{i_j}\beta^{-j}$, all of its orbits hit finite points. This is a directly consequence of Lemma \ref{lem:310}.
\end{proof}
\begin{proof}[\bf{Proof of Corollary \ref{cor:23} and Theorem \ref{thm:24}}]
By Lemma \ref{lem:310} and Corollary \ref{cor:36} and the main result of Mauldin and Williams \cite{MW}, we can calculate the Hausdorff dimension of $\dim_{H}(F_{\beta, N})$.
By Lemma \ref{lem:32} and Corollary \ref{cor:23}, we have the asymptotic result. 
For the second statement of Theorem \ref{thm:24}, we define 
$$D=\{10^{i_1}10^{i_2}10^{i_3}\cdots:n+1\leq i_k\leq N-1 \mbox{ and  there are infinitely many } i_k=N-1\}.$$
By Theorem \ref{Uniquecodings}, all the codings in $D$ are greedy in base $\beta_n$. Clearly, $D$ has uncountably many elements. Moreover,  the following inclusion holds,
$$p(D)=\left\{\sum_{j=1}^{\infty}a_j\beta^{-j}:(a_j)\in D\right\}\subset F_{\beta_n, N}.$$
Now we want to show that $p(D)\cap E_{\beta_n, N}=\emptyset$ and $\dim_{H}(F_{\beta_n, N}\setminus E_{\beta_n, N})>0.$
By the definition of $D$, for any $10^{i_1}10^{i_2}10^{i_3}\cdots$  there are infinitely many   $i_k=N-1$. Without loss of generality, we
assume that $i_1=N-1,$ i.e.  let $$ (a_k)=10^{N-1}10^{i_2}10^{i_3}\cdots.$$
Using the rule $10^n\sim 01^n$, we have 
$$x=(10^{N-1}10^{i_2}10^{i_3}\cdots)_{\beta}=(10^{N}1^n0^{i_2-n}10^{i_3}\cdots)_{\beta}\notin E_{\beta, N},$$
where $(b_k)_{\beta}=\sum_{k=1}^{\infty}b_k\beta^{-k}$. Hence, $p(D)\cap E_{\beta_n, N}=\emptyset$.
In order to prove $\dim_{H}(F_{\beta, N}\setminus E_{\beta, N})>0,$ it suffices to show that 
$\dim_{H}(p(D))>0.$
Here,  the set $(D,\sigma)$ is indeed a subset of some $S$-gap shift \cite{LM}, i.e. $D\subset D^{'}$, where 
$$D^{'}=\{10^{i_1}10^{i_2}10^{i_3}\cdots:n+1\leq i_k\leq N-1\}.$$
The entropy of $D^{'}$ can be calculated, 
i.e. $h(D^{'})=\log \lambda$, where $\lambda$ is the largest positive root of the equation 
$$1=\sum_{k\in\{n+1,\cdots,N-1\}}x^{-k-1} $$ 
Now we construct a subset of $p(D)$ as follows:
let $J$ be the self-similar set with the IFS $$\left\{g_1(x)=\dfrac{x}{\beta^{n+2}}+\dfrac{1}{\beta},g_2(x)=\dfrac{x}{\beta^{N}}+\dfrac{1}{\beta}\right\},$$
i.e. 
$$J=g_1(J)\cup g_2(J).$$
By the definitions of $p(D)$ and $J$, $J\subset p(D)$.
Let  $$E:=(\beta^{N-1}(\beta^N-1)^{-1}, \beta^{n+1}(\beta^{n+2}-1)^{-1}).$$
It is easy to check that 
$g_1(E)\cap g_2(E)=\emptyset,$ and $g_i(E)\subset E$. In other words, the IFS satisfies the open set condition \cite{Hutchinson}. Hence, 
$\dim_{H}(J)=s>0$,
where s is the unique solution of the equation $\beta^{(-n-2)s}+\beta^{-Ns}=1.$
Subsquently,  $$0<\dim_{H}(J)=s\leq\dim_{H}(p(D)).$$
For the last statement of Theorem \ref{thm:24}, it suffices to consider the set $p(D).$
\end{proof}
Now we prove Theorem \ref{thm:25}. We partition the proof into several lemmas. 
The following result is essentially proved in \cite{SKK}. For convenience, we give the detailed proof. 
\setcounter{Lemma}{11}
\begin{Lemma}\label{lem:315}
Given $1<\beta<2$ and let $N\geq 3$. If there exists some $(\eta_1\eta_2\cdots \eta_p)\in \{0,1\}^{p}$ such that $T_{\eta_1\eta_2\cdots \eta_p}(\beta^{-N}(\beta-1)^{-1})\in (0, \beta^{-N}(\beta-1)^{-1})$, then   $E_{\beta, N}$ is a graph-directed self-similar set.
\end{Lemma}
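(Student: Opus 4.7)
My plan is to build a finite Markov partition of the survivor region $I:=[c,(\beta-1)^{-1}]$, where $c:=\beta^{-N}(\beta-1)^{-1}$ is the right endpoint of the hole, whose partition points come from the forward orbit of $c$ under the random $\beta$-transformation. Once this partition is in place, I will promote it to a graph-directed IFS in the sense of Mauldin--Williams \cite{MW} and identify its attractor with $E_{\beta,N}$ up to a countable set, exactly as was done for $F_{\beta,N}$ in Lemmas \ref{lem:37} and \ref{lem:39}.

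First I would define
\[
\mathcal O_c=\bigl\{T_{i_1\cdots i_k}(c):k\ge 0,\ (i_1,\ldots,i_k)\in\{0,1\}^k,\ T_{i_1\cdots i_j}(c)\in I\text{ for all }0\le j\le k\bigr\},
\]
the set of all orbit points of $c$ reachable without leaving $I$. The hypothesis says precisely that the branch $\eta_1\cdots\eta_p$ exits $I$ at step $p$ by entering the open hole $(0,c)$. The plan is to use this escape route, together with the fact that the random map is piecewise affine with constant slope $\beta$ and has a single switch region $[\beta^{-1},\beta^{-1}(\beta-1)^{-1}]$, to argue that $\mathcal O_c$ is a \emph{finite} set: any infinite admissible extension of an orbit starting at $c$ should be prunable to a bounded-length prefix using the hypothesized hole-hitting path as a termination certificate.

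With $\mathcal O_c$ finite, I would arrange its points, together with $(\beta-1)^{-1}$, in increasing order to obtain a finite partition $\{A_1,\ldots,A_m\}$ of $I$. The Markov property is automatic from the definition: for each interval $A_i$ and each $k\in\{0,1\}$ for which $T_k$ is defined on $A_i$, the endpoints of $T_k(A_i)$ are images of elements of $\mathcal O_c$ under $T_k$ and therefore either lie in $\mathcal O_c$ themselves or lie in the hole; hence $T_k(A_i)\cap I$ is a union of the $A_j$'s. I then build the directed multigraph $(V,E)$ whose vertices are the $A_i$ and whose edges from $A_i$ to $A_j$, one for each $k\in\{0,1\}$ with $T_k(A_i)\supset A_j$, are labelled by the similitude $f_k(x)=(x+k)/\beta$ of contraction ratio $1/\beta$. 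The Mauldin--Williams construction then yields a graph-directed self-similar attractor $K$ satisfying the open set condition (the open intervals $\mathrm{int}(A_i)$ serve as the open sets, exactly as in Lemma \ref{lem:37}), and the same argument as in Lemma \ref{lem:37} shows $E_{\beta,N}\subset K\subset E_{\beta,N}\cup C$ for some countable set $C$ consisting of iterated $f_0,f_1$-preimages of $c$.

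The main obstacle, by far, is proving the finiteness of $\mathcal O_c$. The hypothesis only supplies one finite exit path, while the random dynamics produces an entire binary tree of orbit branches, so we must rule out the possibility of an infinite branch that never enters the hole. I would attempt this by showing that whenever an admissible orbit re-visits a point at which the escape path $\eta_1\cdots\eta_p$ can be applied, the tree can be truncated there without losing any partition points; together with the fact that only finitely many points of $\mathcal O_c$ can coexist outside the switch region (since there $T_0$ and $T_1$ are deterministic), this should force $|\mathcal O_c|<\infty$. Completing this combinatorial pruning argument is the technical heart of the lemma and will occupy most of the proof.
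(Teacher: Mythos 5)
There is a genuine gap, and it sits exactly where you locate it: the finiteness of $\mathcal O_c$. Unfortunately this is not merely the hard step---it is false in general under the hypotheses of the lemma. The hypothesis supplies one finite escape path $\eta_1\cdots\eta_p$ for the single point $c=\beta^{-N}(\beta-1)^{-1}$; it gives no control whatsoever over the other branches of the orbit tree of $c$. For a typical (e.g.\ transcendental) $\beta$ the full random orbit of $c$ is dense in $[0,(\beta-1)^{-1}]$, and even after pruning branches that enter the hole, the lazy-type branches (which keep the orbit away from a neighbourhood of $0$) produce infinitely many distinct points of $\mathcal O_c$. Note that the lemma is applied precisely in this generic situation (Theorem \ref{thm:25} via Lemma \ref{lem:316}, where the relevant orbits are \emph{dense}), so a finite Markov partition built from the orbit of $c$ cannot exist there. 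Your fallback idea---truncating the tree whenever the escape path ``can be applied''---does not repair this: by a K\"onig-type argument, finiteness of the pruned tree would require a uniform bound on the escape time over all branches, which the hypothesis does not provide.

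The paper's proof avoids orbit finiteness entirely by a different device, which is the idea you are missing: \emph{fatten the hole instead of partitioning its complement}. By continuity of the $T_j$'s there is $\delta>0$ with $T_{\eta_1\cdots\eta_p}\bigl((c,c+\delta)\bigr)\subset(0,c)$. Choose $L$ so large that every level-$L$ cylinder interval $f_{i_1}\circ\cdots\circ f_{i_L}([0,(\beta-1)^{-1}])$ has length less than $\delta$, and let $\mathbb F$ be the (finite) set of length-$L$ words whose cylinder intervals meet $[0,c]$; their union $\mathbb F'$ satisfies $[0,c]\subset\mathbb F'\subset[0,c+\delta]$. Then some orbit of $x$ hits $[0,c]$ if and only if some orbit of $x$ hits $\mathbb F'$: one direction is trivial, and for the other, an orbit point in $\mathbb F'\setminus[0,c]\subset(c,c+\delta)$ is sent into the open hole by applying $\eta_1\cdots\eta_p$. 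Hence $E_{\beta,N}$ is exactly the set of points all of whose codings omit every block in $\mathbb F$, i.e.\ the projection of a subshift of finite type, and the Mauldin--Williams machinery gives the graph-directed structure. The finitely many states are the length-$(L-1)$ (or length-$L$) words, not orbit points of $c$, which is why no finiteness of $\mathcal O_c$ is ever needed. Your final assembly of the graph and the countable-exceptional-set comparison with Lemmas \ref{lem:37} and \ref{lem:39} would be fine once a correct finite-state description is in hand, but as written the proposal's central claim cannot be established.
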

\begin{proof}
By assumption and the continuity of the $T_{j}$'s, there exists $\delta>0$ such that\[T_{\eta_{1}\ldots \eta_{p}}(\beta^{-N}(\beta-1)^{-1}, \beta^{-N}(\beta-1)^{-1}+\delta)\subset(0, \beta^{-N}(\beta-1)^{-1}).\] 
Set $H=[0, \beta^{-N}(\beta-1)^{-1}+\delta]$.
We partition $[0, (\beta-1)^{-1}]$ in terms of  the iterated function system \[f_j(x)=\dfrac{x+j}{\beta},\,j\in\{0,1\}.\]
For any $L$ we have $$[0, (\beta-1)^{-1}]=\bigcup_{(i_{1},\ldots,i_{L})\in\{1,\ldots,m\}^{L}}f_{i_{1}}\circ\cdots\circ f_{i_{L}}([0, (\beta-1)^{-1}]).$$ We assume  without loss of generality that $L$ is sufficiently large such that $$|f_{i_{1}}\circ\cdots\circ f_{i_{L}}([0, (\beta-1)^{-1}])|<\delta$$ for all $(i_{1},\ldots,i_{L})\in\{0,1\}^{L}$.  Correspondingly, we partition  the symbolic space $\{0,1\}^{\mathbb{N}}$ provided by the cylinders of length $L$. For every $(i_{1},\ldots, i_{L})\in\{0,1\}^{L}$ let $$C_{i_{1}\ldots i_{L}}=\Big\{(x_{n})\in\{0,1\}^{\mathbb{N}}:x_{n}=i_{n} \textrm{ for } 1\leq n\leq L\Big\}.$$ The set $\{C_{i_{1}\ldots {i_{L}}}\}_{(i_{1},\ldots ,i_{L})\in\{0,1\}^{L}}$ is a partition of $\{0,1\}^{\mathbb{N}},$ and  $f_{i_{1}}\circ\cdots\circ f_{i_{L}}([0, (\beta-1)^{-1}])=\pi(C_{i_{1}\ldots {i_{L}}})$.
Let \[\mathbb{F}=\Big\{(i_{1},\ldots,i_{L})\in\{1,\ldots,m\}^{L}:f_{i_{1}}\circ\cdots\circ f_{i_{L}}([0, (\beta-1)^{-1}])\cap [0, \beta^{-N}(\beta-1)^{-1}]\neq\emptyset\Big\}\]
and
\[\mathbb{F^{'}}=\bigcup_{(i_{1},\ldots,i_{L})\in \mathbb{F}}\pi(C_{i_{1}\ldots {i_{L}}}).\]
 By our assumptions on the size of our cylinders the following inclusions hold \[[0, \beta^{-N}(\beta-1)^{-1}]\subset \mathbb{F^{'}}\subset H.\]
Using these inclusions we can show that  $x\notin E_{\beta, N}$ if and only if there exists $(\theta_1,\ldots,\theta_{n_1})\in\{1,\ldots,m\}^{n_1}$ such that  $T_{\theta_1\ldots\theta_{n_1}}(x)\in \mathbb{F^{'}}$.   If $x\notin E_{\beta, N}$ then by the above observation, there exists $(\theta_1,\ldots,\theta_{n_1})\in\{1,\ldots,m\}^{n_1}$ such that  $T_{\theta_1\ldots\theta_{n_1}}(x)\in \mathbb{F^{'}}$. Therefore, $x$ has a coding containing a block from $\mathbb{F}$.  Conversely, if there exists $(\theta_1,\ldots,\theta_{n_1})\in\{1,\ldots,m\}^{n_1}$ such that  $T_{\theta_1\ldots\theta_{n_1}}(x)\in \mathbb{F^{'}}$, then  the condition 
\[T_{\eta_{1}\ldots \eta_{p}}(\beta^{-N}(\beta-1)^{-1}, \beta^{-N}(\beta-1)^{-1}+\delta)\subset(0, \beta^{-N}(\beta-1)^{-1})\] 
yields $x\notin E_{\beta, N}$. 
Taking $\mathbb{F}$ to be the set of forbidden words defining a subshift of finite type, we see that  $E_{\beta, N}$  is a graph-directed self-similar set, see \cite{KarmaKan, MW}.
\end{proof}
Schmeling \cite{Schmeling} proved the following result.
\begin{Lemma}\label{lem:316}
For almost every $\beta\in(1,2)$, the greedy orbits of $1$ and the lazy orbit of  $\bar{1}=(\beta-1)^{-1}-1$ are dense. 
\end{Lemma}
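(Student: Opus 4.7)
The plan is to combine the ergodic theory of the greedy $\beta$-transformation with a Fubini-type transversality argument in the parameter $\beta$. For each fixed $\beta\in(1,2)$, the greedy map $G_\beta$ admits a unique absolutely continuous, ergodic invariant probability measure $\mu_\beta$ (the Parry measure), whose topological support equals $[0,(\beta-1)^{-1}]$. By the Birkhoff ergodic theorem, the set of starting points $x$ whose $G_\beta$-orbit fails to be dense has zero Lebesgue measure. However, the specific point $x=1$ can genuinely fail to have a dense orbit for bad parameters (e.g.\ Pisot $\beta$ such as the $n$-bonacci numbers used in Theorem \ref{thm:21}), so density at this single point must be obtained by varying $\beta$ rather than $x$.

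Accordingly, I would fix a small subinterval $I\subset(1,2)$ on which $[0,(\beta-1)^{-1}]$ can be uniformly controlled and a nonempty open set $U$ in the ambient space. It suffices to show that
\[
B_{I,U}=\{\beta\in I:G_\beta^n(1)\notin U\text{ for all }n\geq 0\}
\]
has Lebesgue measure zero, since a countable cover by such $U$ gives density for a.e. $\beta\in I$, and a countable cover of $(1,2)$ by such $I$ gives the full claim. For each admissible length-$n$ prefix $\alpha_1\cdots\alpha_n$ of the greedy expansion of $1$, the map $\beta\mapsto G_\beta^n(1)$ agrees with the polynomial $P_n(\beta)=\beta^n-\alpha_1\beta^{n-1}-\cdots-\alpha_n$ on the parameter subset realizing that prefix, with derivative $P_n'(\beta)\geq cn\beta^{n-1}$ for some uniform $c>0$ on $I$. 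This transversality implies that on each branch, the pushforward of Lebesgue measure under $\beta\mapsto G_\beta^n(1)$ is absolutely continuous with respect to Lebesgue measure on the ambient interval, with uniformly controlled densities.

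Assuming for contradiction that $B_{I,U}$ has positive Lebesgue measure, I would integrate the Birkhoff time-averages $\frac{1}{N}\sum_{n=0}^{N-1}\mathbf{1}_U(G_\beta^n(1))$ over $\beta\in B_{I,U}$ and use the pushforward absolute continuity together with $\mu_\beta(U)>0$ (from full support of $\mu_\beta$) to produce some $n$ for which $G_\beta^n(1)\in U$ on a positive-measure subset of $B_{I,U}$, contradicting its definition. The symmetric statement for the lazy orbit of $\bar 1=(\beta-1)^{-1}-1$ then follows from the involution $x\mapsto(\beta-1)^{-1}-x$, which conjugates the greedy and lazy maps and interchanges $1$ with $\bar 1$; the two full-measure parameter sets therefore coincide.

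The main obstacle is the transversality/Fubini step: one must uniformly control the countable family of polynomial branches traversed by $\beta\mapsto G_\beta^n(1)$ and handle the delicate combinatorial dependence of their domains of definition on $\beta$, since a change of prefix corresponds to a non-smooth transition in the parameter. This careful bookkeeping, combined with a quantitative bound (uniform in $\beta\in I$) on the measure of the exceptional set where orbits avoid $U$ up to time $N$, is precisely what drives Schmeling's original argument.
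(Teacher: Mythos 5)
The paper does not actually prove this lemma: it is quoted from Schmeling \cite{Schmeling} ("Schmeling proved the following result") and no internal argument is given, so there is nothing in the text to compare your proof against line by line. Your sketch does correctly identify the heart of the matter --- ergodicity of the Parry measure says nothing about the single point $x=1$, so one must vary $\beta$ and exploit the parametrized family $\beta\mapsto G_\beta^n(1)=P_n(\beta)=\beta^n-\alpha_1\beta^{n-1}-\cdots-\alpha_n$ --- and this is indeed the mechanism behind Schmeling's theorem. The reduction of the lazy statement to the greedy one via the reflection $x\mapsto(\beta-1)^{-1}-x$ is also fine.

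As a proof, however, the proposal has a genuine gap at its central step. First, a small but symptomatic error: from $P_n(\beta)=\beta P_{n-1}(\beta)-\alpha_n$ one gets $P_n'(\beta)=\sum_{k=0}^{n-1}\beta^{\,n-1-k}P_k(\beta)$ with $P_0\equiv 1$ and $P_k\ge 0$, hence $P_n'(\beta)\ge\beta^{\,n-1}$; your claimed bound $P_n'\ge c\,n\,\beta^{\,n-1}$ is false in general (it would require the orbit points $P_k(\beta)$ to be bounded below on average), although the true estimate $P_n'\asymp\beta^{\,n}$ on a compact $I$ is what the argument actually needs. More seriously, the concluding contradiction does not close. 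Absolute continuity of the pushforward of Lebesgue measure under $\beta\mapsto P_n(\beta)$, for each fixed $n$, yields only an \emph{upper} bound on $\mathrm{Leb}\{\beta:P_n(\beta)\in U\}$; it gives no lower bound, and even a uniform lower bound for every $n$ would not force $\bigcup_n\{\beta:P_n(\beta)\in U\}$ to have full measure without controlling correlations between different times $n$. Likewise you cannot ``integrate the Birkhoff time-averages'' of $\mathbf{1}_U$ along the orbit of $1$: Birkhoff's theorem at the point $1$ is precisely what is unavailable. What is actually required --- and what Schmeling supplies --- is a bounded-distortion argument on the parameter cylinders of the expansion of $1$: each maximal parameter interval on which the prefix $\alpha_1\cdots\alpha_n$ is constant is mapped by $P_n$ onto an interval of definite size with uniformly bounded distortion, so a density point of a putative positive-measure exceptional set $B_{I,U}$ would transport to a subset of the image interval of proportion arbitrarily close to $1$ that still avoids $U$, a contradiction once the images are known to contain $U$. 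Without this quantitative exclusion step the proposal is a plausible plan rather than a proof, as your own final paragraph concedes.
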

\begin{proof}[\bf{Proof of Theorem \ref{thm:25}}]
Theorem \ref{thm:25} follows immediately from Lemmas \ref{lem:315} and \ref{lem:316}.
\end{proof}
\section{Final remarks}
Similar results are available if we consider $\beta$-expansions with more than two digits. 
For some Pisot numbers, we may  implement similar ideas which are utilized in Lemmas \ref{lem:34} \ref{lem:32}. Finally we pose a problem.
\begin{Problem}
Does there exist $\delta>0$ such that for any $\beta\in(2-\delta,2)$, $\dim_{H}(V_{\beta})=1.$

\end{Problem}

\section*{Acknowledgements}
The second author was granted by the National Science Foundation of China no.11671147. The authors would like to thank  Nikita Sidorov for the discussion of Lemma \ref{lem:32} and other related problems.


\begin{thebibliography}{10}

\bibitem{AK}
Shigeki Akiyama and Vilmos Komornik.
\newblock Discrete spectra and {P}isot numbers.
\newblock {\em J. Number Theory}, 133(2):375--390, 2013.

\bibitem{SKK}
Simon Baker, Karma Dajani, and Kan Jiang.
\newblock On univoque points for self-similar sets.
\newblock {\em Fund. Math.}, 228(3):265--282, 2015.

\bibitem{KM}
Karma Dajani and Martijn de~Vries.
\newblock Measures of maximal entropy for random {$\beta$}-expansions.
\newblock {\em J. Eur. Math. Soc. (JEMS)}, 7(1):51--68, 2005.

\bibitem{DajaniDeVrie}
Karma Dajani and Martijn de~Vries.
\newblock Invariant densities for random {$\beta$}-expansions.
\newblock {\em J. Eur. Math. Soc. (JEMS)}, 9(1):157--176, 2007.

\bibitem{KarmaKan}
Kan Jiang and Karma Dajani.
\newblock Subshift of finite type and self-similar sets.
\newblock {\em Nonlinearity}, 30(2):659-686, 2017.


\bibitem{DJKL}
Karma Dajani, Kan Jiang, Derong Kong, and Wenxia Li.
\newblock Multiple expansions of real numbers with digits set $\{0,1,q\}$.
\newblock {\em arXiv:1508.06138}, 2015.

\bibitem{DJKL1}
Karma Dajani, Kan Jiang,  Derong Kong, and Wenxia Li.
\newblock Multiple codings for self-similar sets with overlaps.
\newblock {\em arXiv:1603.09304}, 2016.

\bibitem{KarmaCor}
Karma Dajani and Cor Kraaikamp.
\newblock Random {$\beta$}-expansions.
\newblock {\em Ergodic Theory Dynam. Systems}, 23(2):461--479, 2003.

\bibitem{MK}
Martijn de~Vries and Vilmos Komornik.
\newblock Unique expansions of real numbers.
\newblock {\em Adv. Math.}, 221(2):390--427, 2009.

\bibitem{EK}
P.~Erd{\H{o}}s and V.~Komornik.
\newblock Developments in non-integer bases.
\newblock {\em Acta Math. Hungar.}, 79(1-2):57--83, 1998.

\bibitem{FP}
Andrew Ferguson and Mark Pollicott.
\newblock Escape rates for {G}ibbs measures.
\newblock {\em Ergodic Theory Dynam. Systems}, 32(3):961--988, 2012.

\bibitem{GS}
Paul Glendinning and Nikita Sidorov.
\newblock Unique representations of real numbers in non-integer bases.
\newblock {\em Math. Res. Lett.}, 8(4):535--543, 2001.

\bibitem{GlendinningSidorov}
Paul Glendinning and Nikita Sidorov.
\newblock The doubling map with asymmetrical holes.
\newblock {\em Ergodic Theory Dynam. Systems}, 35(4):1208--1228, 2015.

\bibitem{Hutchinson}
John~E. Hutchinson.
\newblock Fractals and self-similarity.
\newblock {\em Indiana Univ. Math. J.}, 30(5):713--747, 1981.

\bibitem{Dejun}
Dejun Feng.
\newblock On the topology of polynomials with bounded integer coefficients.
\newblock {\em J. Eur. Math. Soc. (JEMS)}, 18(1):181-193, 2016.

\bibitem{LM}
Douglas Lind and Brian Marcus.
\newblock {\em An introduction to symbolic dynamics and coding}.
\newblock Cambridge University Press, Cambridge, 1995.

\bibitem{MW}
R.~Daniel Mauldin and S.~C. Williams.
\newblock Hausdorff dimension in graph directed constructions.
\newblock {\em Trans. Amer. Math. Soc.}, 309(2):811--829, 1988.

\bibitem{Parry}
W.~Parry.
\newblock On the {$\beta $}-expansions of real numbers.
\newblock {\em Acta Math. Acad. Sci. Hungar.}, 11:401--416, 1960.

\bibitem{Schmeling}
J{\"o}rg Schmeling.
\newblock Symbolic dynamics for {$\beta$}-shifts and self-normal numbers.
\newblock {\em Ergodic Theory Dynam. Systems}, 17(3):675--694, 1997.

\bibitem{Sidorov}
Nikita Sidorov.
\newblock Almost every number has a continuum of {$\beta$}-expansions.
\newblock {\em Amer. Math. Monthly}, 110(9):838--842, 2003.

\bibitem{Sidorov2003}
Nikita Sidorov.
\newblock Universal {$\beta$}-expansions.
\newblock {\em Period. Math. Hungar.}, 47(1-2):221--231, 2003.

\bibitem{SN}
Nikita Sidorov.
\newblock Expansions in non-integer bases: lower, middle and top orders.
\newblock {\em J. Number Theory}, 129(4):741--754, 2009.

\bibitem{SidorovSolomyak}
Nikita Sidorov and Boris Solomyak.
\newblock On the topology of sums in powers of an algebraic number.
\newblock {\em Acta Arith.}, 149(4):337--346, 2011.

\end{thebibliography}
\end{document}